\documentclass[a4paper,12pt,reqno]{amsart}
\usepackage{amsthm, mathrsfs,amssymb,amsmath,}
\usepackage{enumerate}
\usepackage{mathtools}
\usepackage{orcidlink}
\makeatletter
\@namedef{subjclassname@2010}{%
	\textup{2010} Mathematics Subject Classification}
\makeatother

\usepackage{hyperref}
\allowdisplaybreaks
\numberwithin{equation}{section}

\usepackage{graphicx}
\usepackage{enumitem}

\renewcommand{\arraystretch}{2.5}

\newtheorem{theorem}{Theorem}[section]
\newtheorem{lemma}[theorem]{Lemma}
\usepackage{xcolor}

\theoremstyle{definition}
\newtheorem{definition}[theorem]{Definition}

\newtheorem{proposition}[theorem]{Proposition}

\theoremstyle{remark}

\numberwithin{equation}{section}
\newcommand{\F}{\mathcal{F}}

\newcommand{\N}{\mathbb{N}}
\newcommand{\G}{\mathcal{G}}

\newcommand{\HH}{\mathcal{H}}

\def\intl{\int\limits}

\def\suml{\sum\limits}

\begin{document}
	
	\title[The Dimension Spectrum of Continued fractions]{Dimension Spectrum of Continued fraction Expansions with Coefficients restricted to the Fibonacci Sequence}
	
	
	
	\author{Arpit Dawar\orcidlink{0009-0006-0789-4205}}
	\address{Department of Mathematics, Indian Institute of Technology Delhi, New Delhi, India 110016}
	\email{arpitdawar.m@gmail.com}
	
		\author{Amit Priyadarshi\orcidlink{0000-0002-7706-1587}}
	\address{Department of Mathematics, Indian Institute of Technology Delhi, New Delhi, India 110016}
	
	\email{priyadarshi@maths.iitd.ac.in}
	
	
	%

	
	\subjclass{Primary 11J70, 28A80; Secondary 37D35, 11K55}
	
	\keywords{Continued fractions, Dimension spectrum, Hausdorff dimension,  Fibonacci sequence}
	
	\begin{abstract}
    In this paper, we analyze the structure of the dimension spectrum of continued fraction expansions with coefficients restricted to the generalized Fibonacci sequence. Let $F_{(a_1,a_2)}$ denote the generalized Fibonacci sequence starting with the positive integers $a_1<a_2$. We prove that the continued fractions whose digits lie in $F_{(a_1,a_2)}$ have full dimension spectrum for every $(a_1,a_2)$ such that $a_1 \geq2$, or $a_1=1$ and $a_2\geq3$. On the other hand, using the numerical tools developed by Falk and Nussbaum, we show that the dimension spectrum has a gap for continued fractions with digits restricted to each of the sets $F_{(1,2)}$ and $F_{(2,1)}$, where $F_{(2,1)}$ denotes the set of Lucas numbers. Moreover, for $F_{(1,2)}$ and $F_{(2,1)}$, we prove that the dimension spectrum always contains a non-trivial interval.
\end{abstract}

	\maketitle

	
    \section{Introduction}
Let $[a_1,a_2,\ldots]$ denote the continued fraction expansion of an irrational number $x\in[0,1]$. That is, 
$$x=[a_1,a_2,\ldots]:=\frac{1}{a_1+\frac{1}{a_2+\frac{1}{a_3+\cdots}}},$$
    where the coefficients $a_i\in\N$ for all $i$. Let us consider the set of continued fraction expansions whose coefficients lie in a set $\Lambda\subseteq\N$. We denote this set by $J_{\Lambda}$, i.e. for any set $\Lambda\subseteq \N$,
    $$J_{\Lambda}:=\{x=[a_1,a_2,\ldots]:a_i\in\Lambda\}\subset[0,1].$$
    The study of these fractal sets and their dimensional properties has become of significant interest due to their applications to the Markoff and Lagrange spectra \cite{MR647383, MR803348, MR1010419}. The Hausdorff dimension of the set $J_{\Lambda}$, denoted by  $\dim_H(J_{\Lambda})$, has received considerable attention in the literature, see, for example, \cite{MR647383,MR803348,MR4461209,MR4878,MR1034198, MR1154044,MR3742587,MR1487636,MR2846362}. For the definitions and basic properties of Hausdorff dimension, see \cite{MR3236784}. \par
    In this paper, we investigate the dimension spectrum of an infinite subset $\Lambda\subseteq\N$, which is defined as 
    $$DS(\Lambda)=\{\dim_H(J_A):A\subseteq\Lambda\}.$$ 
  The structure of the dimension spectrum has been investigated for different infinite subsets of $\N$, see \cite{CHITANGA2026,MR3960790,MR4068257,MR4461210,MR2069367,jurga2021dimension,MR2197868}. Initially, the dimension spectrum of $\N$ was studied by Kesseb\"ohmer and Zhu in \cite{MR2197868}, where it was shown that $DS(\N)=[0,1]$. This proved the Texan conjecture, originally posed by Hensley in \cite{MR1387719} and independently by Mauldin and Urbański in \cite{MR1487636}. Subsequently, in \cite{MR4068257}, Chousionis, Leykekhman, and Urbański analyzed the dimension spectrum of various sets, including the set of prime numbers, the set of squares, and sets of consecutive powers of integers $q\geq2$. \par
  Recently, in \cite{CHITANGA2026}, Chitanga, Lemmens, and Nussbaum obtained a more general result  covering many of these sets. More precisely, it was shown that if $F=\{a_1,a_2,\ldots\}\subset\N$ with $2\leq a_1<a_2<\ldots$ and $a_na_m\geq a_{n+m}$ for all $n,m\in\N$, then $$\left[0,\dim_H(J_F)\right]=DS(F).$$
  This motivates us to investigate the dimension spectrum of sets that do not satisfy the condition $a_na_m\geq a_{n+m}$ for all $n,m\in\N$. One such example is the generalized Fibonacci sequence. Let $F_{(a_1,a_2)}$ denote the generalized Fibonacci sequence starting with the positive integers $a_1<a_2$, that is, $$F_{(a_1,a_2)}=\{(a_1,a_2,a_3,\ldots):a_1<a_2,\ a_n=a_{n-1}+a_{n-2}\ \forall\ n\geq3\}.$$
  We analyze the structure of the dimension spectrum of the set $F_{(a_1,a_2)}$ for integers $1\leq a_1<a_2$. In section 3, we prove that  the dimension spectrum of the generalized Fibonacci sequence starting with an integer $\geq2$ is full. More precisely, we establish the following result. 

\begin{theorem}\label{th1.1}
    If $F_{(a_1,a_2)}=\{a_1,a_2,\ldots\}\subset\N$ with $2\leq a_1<a_2$ and  $a_n=a_{n-1}+a_{n-2}$ for all $n\geq3$, then $$\left[0,\dim_{H}(J_{F_{(a_1,a_2)}})\right]=DS(F_{(a_1,a_2)}).$$
\end{theorem}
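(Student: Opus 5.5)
We would follow the Kesseböhmer–Zhu strategy in the form used by Chitanga, Lemmens and Nussbaum. For a (finite or infinite) subset $A\subseteq F=F_{(a_1,a_2)}$, write $P_A(t)$ for the pressure of the iterated function system $\{\phi_a(x)=1/(a+x)\}_{a\in A}$. By Mauldin–Urbański, $\dim_H(J_A)=\inf\{t:P_A(t)\le 0\}$, and $\dim_H(J_A)$ is the supremum of $\dim_H(J_B)$ over finite $B\subseteq A$. Since $a_n$ grows like $\varphi^n$, the series $\sum_n a_n^{-t}$ converges for every $t>0$. Hence $\theta_F=0$, and the dimension is continuous along the natural approximations.

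Fix a target $s\in(0,\dim_H J_F)$. The value $s=0$ is realized by a singleton $A$, and $s=\dim_H J_F$ by $A=F$. We build $A$ by a greedy algorithm. Go through $a_1,a_2,\dots$ in order, and add $a_n$ to the current set $A_{n-1}$ exactly when $\dim_H(J_{A_{n-1}\cup\{a_n\}})\le s$. Here the relevant quantity is always $\dim_H(J_{A_{n-1}\cup\{a_n\}\cup\{a_k:k>n\}})$ compared with $s$. Equivalently, at stage $n$ we keep the tail $T_n=\{a_k:k>n\}$ available, and we include $a_n$ if doing so does not push the dimension of $A_{n-1}\cup\{a_n\}$ above $s$. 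By monotonicity $\dim_H J_A\le s$ always holds. The standard argument gives equality in two cases. If infinitely many digits are rejected, the key estimate is needed. If only finitely many are rejected, $A$ contains a tail of $F$; then, by continuity and the fact that the final rejected $a_N$ pushed the dimension over $s$ while the whole tail was included, we get $\dim_H J_A\ge s$.

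The key estimate, and the main obstacle, is a \emph{tail domination} lemma. For each $n$ and each finite set $B\subseteq\{a_1,\dots,a_{n-1}\}$, one digit $a_n$ should contribute to the pressure no more than the whole tail $\{a_k\}_{k>n}$ does. More precisely, we want
$$P_{B\cup\{a_n\}}(t)\le P_{B\cup\{a_k:k>n\}}(t)\quad\text{for } t\in(0,1]$$
in the relevant range. When $a_n$ is rejected, we then get $\dim_H(J_{B\cup T_n})\ge s$, and letting $n\to\infty$ along rejected indices closes the gap. In Chitanga–Lemmens–Nussbaum, this step is where $a_na_m\ge a_{n+m}$ is used: it compares the cylinder derivatives $|\phi_{a_n}'|\asymp a_n^{-2}$ with sums over words in the tail.

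For Fibonacci-type sequences we would replace that condition with a word-level comparison. The single digit $a_n$ should be dominated by the two-letter words $a_{n+1}a_k$ or by the single tail letters $a_{n+1},a_{n+2},\dots$. We would try to show
$$a_n^{-2t}\le \sum_{k>n} (a_k+1)^{-2t}$$
uniformly for $t$ up to $\dim_H J_F$. Using $a_k\approx c\varphi^k$, the right side is about $a_n^{-2t}\sum_{j\ge1}\varphi^{-2tj}(1+1/a_{n+j})^{-2t}$, which exceeds $a_n^{-2t}$ when $\varphi^{2t}\le 2$ roughly, i.e. $t\le \log 2/(2\log\varphi)\approx0.72$. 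The hypothesis $a_1\ge2$ would be used at this point. It forces $\dim_H J_F$ to be small, below this threshold. We would verify that bound with the Fibonacci growth $a_n\ge 2\varphi^{n-1}$ and the crude estimate $\sum_n (a_n)^{-2t}\le 1$ for $t$ near the threshold. We would also absorb the distortion constants (bounded by $4$ for continued fraction maps) by working at the level of the pressure, with bounded distortion applied to words. If the crude estimate fails for small $a_1$, such as $(2,3)$, we would treat those finitely many starting pairs separately with an explicit bound on $\dim_H J_F$.
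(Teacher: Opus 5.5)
Your architecture matches the paper's. The greedy construction is the strict-break-point criterion of Lemma~\ref{l3.2}, and your tail-domination inequality is the paper's condition $\alpha_m(s)=\sum_{j}(a_{k_0}/a_{k_0+j})^{2s}>1$. However, the step you flag as the main obstacle is not established, and as sketched it fails.

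\textbf{Uniformity in $n$.} Domination must hold for \emph{every} index $n\ge 2$ that can occur as a break point. Your threshold $t\le \log 2/(2\log\varphi)\approx 0.72$ comes from $a_{n+j}/a_n\approx\varphi^{j}$, which is only asymptotic in $n$. For small $n$ the ratios depend on the initial pair. For $(a_1,a_2)=(2,5)$ and $n=3$ one gets $\sum_{j\ge1}(7/a_{3+j})^{1.4}\approx 0.97<1$, so domination already fails at $t=0.7$, and your claimed threshold is not uniform. What is needed is a bound valid for all $n\ge 2$, set against a rigorous upper bound on $\dim_H J_F$.

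Likewise, $a_n\ge 2\varphi^{n-1}$ is false for $(2,3)$, since already $a_2=3<2\varphi$. Moreover, $(2,3)$ has $\dim_H J_F>1/2$: testing $L_{1/2}$ on $(1+\sqrt2+x)^{-1}$ gives a ratio above $1.19$. So the case you postpone is exactly where the work lies.

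The paper obtains uniformity as follows.
\begin{enumerate}
\item Lemma~\ref{l2.1} ($a_{n+j}\le 2^j a_n$ for all $n\ge 2$) gives $\sum_j 2^{-2sj}=1/(2^{2s}-1)>1$ for every $s<1/2$.
\item Proposition~\ref{p2.4} together with the test function $(\lambda+x)^{-2s}$ shows $\dim_H J_F<1/2$ for $a_1\ge 3$ and for $(2,a_2)$ with $a_2\ge 6$.
\item For $(2,3),(2,4),(2,5)$, Lemma~\ref{l2.2} shows the ratios $a_n/a_{n+j}$ are alternately monotone in $n$, governed by the sign of $D=a_2^2-a_1a_2-a_1^2$, so the worst case is $n\in\{2,3\}$. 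This is combined with $\dim_H J_F\le 0.6$ and a finite numerical check.
\end{enumerate}

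\textbf{Distortion.} The comparison must hold with constant exactly $1$, so distortion cannot be ``absorbed at the level of pressure.'' Replacing each occurrence of $a_n$ in a word by tail letters can cost a distortion factor at every replaced occurrence. That loss is exponential in word length and does change the pressure.

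What makes the comparison exact is monotonicity. The paper tests $L_{s,T_m}$ on the eigenvector $v_s$ of $L_{s,\Lambda\cup\{a_{k_0}\}}$, which is decreasing by Theorem~\ref{th1.4}(2). It then uses $(a_{k_0}+x)/(a_{k_0+j}+x)\ge a_{k_0}/a_{k_0+j}$ and concludes with Lemmas~\ref{l2.6} and~\ref{l2.7}. Equivalently, at word level, $|\phi_u'|$ is decreasing, so replacements only help. This also shows the sharp condition is $\sum_j(a_n/a_{n+j})^{2s}>1$, without the $+1$ in your denominators.

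\textbf{Case split.} Your case split is inverted. Under the rule ``accept $a_n$ iff $\dim_H(J_{A_{n-1}\cup\{a_n\}})\le s$,'' the case of finitely many rejections is the one that needs domination. There the last rejected $a_N$ is a strict break point for $(A_{N-1},s)$. Since the tail was absent when $a_N$ was tested, continuity alone cannot give $\dim_H(J_{A_{N-1}\cup T_N})\ge s$.
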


In section 4, we investigate the dimension spectrum of the generalized Fibonacci sequence $F_{(a_1,a_2)}$ in the case $a_1=1$. We show that the dimension spectrum is full whenever $a_2\geq3$. In contrast, for $F_{(1,2)}$, the dimension spectrum exhibits a gap. Moreover, we show that the dimension spectrum of the set of Lucas numbers has a similar structure to that of $F_{(1,2)}$. More precisely, we prove the following results.
\begin{theorem}\label{th1.2}
    Let $F_{(a_1,a_2)}=\{a_1,a_2,\ldots\}\subset\N$ with $a_n=a_{n-1}+a_{n-2}$ for all $n\geq3$. Then the following statements hold.
    \begin{enumerate}
    \item[(i)] If $a_1=1$ and $a_2\geq3$, then $$\left[0,\dim_{H}(J_{F_{(a_1,a_2)}})\right]=DS(F_{(a_1,a_2)}).$$
        \item[(ii)] There exists $0<s<\dim_{H}(J_{F_{(1,2)}})$ such that $[0,s]\subset DS(F_{(1,2)})$.
        \item[(iii)] There exist  $0<a<b<\dim_{H}(J_{F_{(1,2)}})$ such that $(a,b)\cap DS(F_{(1,2)})=\varnothing$.
    \end{enumerate}
\end{theorem}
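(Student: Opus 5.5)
Theorem~\ref{th1.2} has three parts, and I would prove them with three different tools.

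For (i) I would follow the scheme behind Theorem~\ref{th1.1}, which is the Chitanga--Lemmens--Nussbaum criterion. Write $\Lambda=\{\lambda_1<\lambda_2<\cdots\}$ and $P_A(t)$ for the pressure of $J_A$. Full spectrum $[0,\dim_H J_\Lambda]$ follows from a ``tail domination'' property: for every $n$ and every $t$ in the relevant range, removing the single digit $\lambda_n$ can be compensated by the whole tail $\{\lambda_{n+1},\lambda_{n+2},\dots\}$. The standard sufficient form of this is
\[
\lambda_n^{-2t}\le \sum_{k>n}\lambda_k^{-2t}\quad\text{for all } t\le \dim_H J_\Lambda ,
\]
up to the bounded distortion constants of the Gauss system. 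With that inequality, a greedy construction shows that every value in $[0,\dim_H J_\Lambda]$ is attained.

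Since $\lambda_k\approx c\varphi^k$, the tail sum is roughly $\lambda_n^{-2t}\,\varphi^{-2t}/(1-\varphi^{-2t})$. So the inequality comes down to $\varphi^{2t}\le 2$, that is $t\le \log 2/(2\log\varphi)\approx 0.72$. Because Fibonacci-type numbers are not exactly geometric, I would instead use the exact identity $\lambda_{n+2}=\lambda_{n+1}+\lambda_n$ and handle the distortion by working with the actual cylinder lengths rather than $\lambda^{-2t}$.

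The key estimate is then an upper bound $\dim_H J_{F_{(1,a_2)}}\le t_0$ for $a_2\ge 3$, with $t_0$ safely below the critical threshold. I would get this from the crude cover bound $\sum_{\lambda\in\Lambda}(\lambda+1)^{-2s}\cdot(\text{distortion})<1$, which holds once $a_2\ge3$. The digit $1$ is present, but the next digit is at least $3$, which makes the series small. The digit $1$ itself (the first digit) needs special care, because its cylinder is very large. I would treat it by a direct comparison: $J_{\Lambda\setminus\{1\}}$ versus sets in which $1$ is replaced by words in the tail. I expect this to be the main obstacle, since it is exactly where $F_{(1,2)}$ fails.

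For (ii) I would drop the first few digits. For $\Lambda'=\{\lambda_m,\lambda_{m+1},\dots\}$ with $m$ large, the dimension of $J_{\Lambda'}$ is close to $1/2$, which lies below the threshold. The tail-domination argument then applies to $\Lambda'$ and gives $[0,\dim_H J_{\Lambda'}]\subset DS(F_{(1,2)})$. Taking $s=\dim_H J_{\Lambda'}$, the bound $s<\dim_H J_{F_{(1,2)}}$ follows from strict monotonicity of dimension under adding digits.

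For (iii) I would separate subsets according to whether they contain both $1$ and $2$. If $A$ omits $1$ or omits $2$, then $A\subseteq F_{(1,2)}\setminus\{1\}$ or $A\subseteq F_{(1,2)}\setminus\{2\}$, so $\dim_H J_A\le b':=\max(\dim_H J_{\Lambda\setminus\{1\}},\dim_H J_{\Lambda\setminus\{2\}})$. If $A$ contains both $1$ and $2$, then $\dim_H J_A\ge a':=\dim_H J_{\{1,2\}}\approx0.531$. It then suffices to show $b'<a'$ and set $(a,b)=(b',a')$. I would verify this rigorously with the Falk--Nussbaum enclosure method: compute a rigorous lower bound for $\dim J_{\{1,2\}}$ and rigorous upper bounds for the two deleted sets, handling the infinite alphabet by truncating and bounding the tail explicitly. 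The certified numerics are the hardest step of (iii).
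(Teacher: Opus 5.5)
Parts (i) and (iii) have genuine gaps, and (iii) fails as proposed.

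\textbf{Part (iii).} Your splitting is exactly Lemma~\ref{l4.5} with $a_k=2$. By Proposition~\ref{p2.4} (map $1\mapsto 2$) one has $\dim_H J_{F_{(1,2)}\setminus\{1\}}\le\dim_H J_{F_{(1,2)}\setminus\{2\}}$, so $b'=\dim_H J_{F_{(1,2)}\setminus\{2\}}$. But $b'<a'$ is false.

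To see this, test $L_{s,\{1,3,5\}}$ on $v_s(x)=(\varphi+x)^{-2s}$ from Lemma~\ref{l2.8}, where $\varphi=\frac{1+\sqrt5}{2}$:
\[
\frac{(L_{s,\{1,3,5\}}v_s)(x)}{v_s(x)}=\varphi^{-2s}\Bigl(1+\sum_{a\in\{3,5\}}\Bigl(\frac{\varphi+x}{a+x+\varphi^{-1}}\Bigr)^{2s}\Bigr).
\]
This is increasing in $x$. At $x=0$, $s=0.5313$ it equals about $0.5997\,(1+0.4252+0.2664)\approx1.014$. Lemma~\ref{l2.7} and Theorem~\ref{th1.4} then give $\dim_H J_{\{1,3,5\}}>0.5313>\dim_H J_{\{1,2\}}$. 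Hence $b'>a'$, your interval is empty, and the choice $k=2$ produces no gap.

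The gap sits much higher. The paper applies Lemma~\ref{l4.5} with $a_k=8$ and compares two bounds:
\begin{itemize}
\item $\dim_H J_{F_{(1,2)}\setminus\{8\}}\le0.7976$, from a Falk--Nussbaum enclosure of a truncation at $M=50$ plus the tail bound of Theorem~\ref{th4.1} with $K=4$, $\chi\ge2\ln\varphi$;
\item $\dim_H J_{\{1,2,3,5,8\}}\ge0.7986$.
\end{itemize}

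\textbf{Part (i).} Two steps do not go through.

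First, the upper bound. A first-level cover sum can never drop below $1$ when $1\in\Lambda$: the digit $1$ alone contributes $\|\phi_1'\|_\infty^s=1$ (equivalently $4^s\cdot 2^{-2s}=1$ with distortion constant $4$). So your claim that the sum is $<1$ once $a_2\ge3$ is false, and you leave the treatment of this digit open. The paper's device is to use $v_s(x)=(\varphi+x)^{-2s}$ as the test function. Then the digit $1$ contributes exactly $\varphi^{-2s}$, and the remaining digits are controlled by the explicit quantity $\mu(s,a_2)$. This gives:
\begin{itemize}
\item $\dim_H J_{F_{(1,a_2)}}<\frac12$ for $a_2\ge 13$;
\item the bounds $0.7$, $0.65$, $0.63$ for $a_2=4,5,6$;
\item for $a_2=3$, the rigorous bound $0.71$ of Lemma~\ref{l4.3} is needed instead.
\end{itemize}

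Second, your threshold $\log2/(2\log\varphi)\approx0.72$ is only asymptotic. For small indices $a_k/a_{k+j}$ is not $\approx\varphi^{-j}$, and by Lemma~\ref{l2.2} the worst case is $k\in\{2,3\}$. For $F_{(1,3)}=\{1,3,4,7,11,\dots\}$ and $n=3$ one finds $\sum_{j\ge1}(4/a_{3+j})^{1.4}\approx0.95<1$, while $\dim_H J_{F_{(1,3)}}\ge0.7096$. So your sufficient inequality genuinely fails in the top part of the range, and no bound ``safely below the threshold'' exists for $a_2=3$. It also fails at $n=1$, but only $n\ge2$ is ever needed, since a strict break point exceeds $\max\Lambda$ with $\Lambda\neq\varnothing$.

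The missing idea is to localize the break point. If $s>0.67$ and $a_{k_0}$ is a strict break point for $(\Lambda,s)$, then $\dim_H J_{\Lambda\cup\{a_{k_0}\}}\ge s$. Since $\dim_H J_{\{1,3,4,7\}}\le0.661$, this forces $k_0\ge5$. Starting from $a_5=11$, the tail sum does exceed $1$ for all $s\le0.71$. For $a_2=4$ the paper does the same using $\dim_H J_{\{1,4,5,9\}}\le0.597$.

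\textbf{Part (ii)} reaches a correct conclusion, but $\dim_H J_{\{\lambda_m,\lambda_{m+1},\dots\}}\to0$, not $\frac12$, because $\sum_k\lambda_k^{-2t}<\infty$ for every $t>0$. The detour is also unnecessary. The bound $a_k/a_{k+j}\ge2^{-j}$ for $k\ge2$ (Lemma~\ref{l2.1}) already gives the tail condition for all $s<\frac12$ on $F_{(1,2)}$ itself; this is Theorem~\ref{th3.3}. Since $\dim_H J_{F_{(1,2)}}\ge\dim_H J_{\{1,2\}}>\frac12$, one can take $s=\frac12$.
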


\begin{theorem}\label{th1.3}
    If $F_{(2,1)}=\{2,1,\ldots\}\subset\N$ with $a_n=a_{n-1}+a_{n-2}$ for all $n\geq3$, then
    \begin{enumerate}
        \item[(i)] there exists $0<s<\dim_{H}(J_{F_{(2,1)}})$ such that $[0,s]\subset DS(F_{(2,1)})$,
        \item[(ii)] there exist  $0<a<b<\dim_{H}(J_{F_{(2,1)}})$ such that $(a,b)\cap DS(F_{(2,1)})=\varnothing$.
    \end{enumerate}
\end{theorem}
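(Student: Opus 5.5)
The Lucas set is $L:=F_{(2,1)}=\{1,2,3,4,7,11,18,29,47,\ldots\}$. For $n\ge 4$ its elements satisfy $a_n=a_{n-1}+a_{n-2}$, and the elements $\geq 3$ form the tail $\{3,4,7,11,\ldots\}=F_{(3,4)}$. I would treat the two parts separately, following the strategy used for $F_{(1,2)}$ in Theorem~\ref{th1.2}(ii),(iii).

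\textbf{Part (i).} I would take $s:=\dim_H(J_{F_{(3,4)}})$, or the dimension of a further tail $\{a_n:n\ge N\}$ if that is more convenient. Theorem~\ref{th1.1} applies to $F_{(3,4)}$ because $3\ge 2$, so $[0,s]=DS(F_{(3,4)})\subset DS(L)$. It remains to show $s<\dim_H(J_L)$. This is immediate: $J_{F_{(3,4)}}\subsetneq J_L$, and removing the digits $1,2$ from an infinite conformal IFS strictly lowers the dimension, for instance because the pressure $P_L(t)>P_{F_{(3,4)}}(t)$ at $t=s$.

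\textbf{Part (ii).} This is the main obstacle. The idea is that the digit $1$ contributes enormously to dimension. Every $A\subseteq L$ with $1\notin A$ satisfies $A\subseteq\{2,3,4,7,11,\ldots\}=:L'$, so $\dim_H(J_A)\le \dim_H(J_{L'})=:a$. Now consider $A\ni 1$ with $A\neq\{1\}$, so that $A$ contains some $\ell\ge 2$ from $L$. The worst case is the largest $\ell$ allowed, but an infinite set of large digits also matters. So I would instead split by the smallest digit $>1$ in $A$.

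If $2\in A$, then $\dim_H(J_A)\ge \dim_H(J_{\{1,2\}})\approx 0.5313$. If $2\notin A$ but $3\in A$, then $\dim_H(J_A)\ge\dim_H(J_{\{1,3\}})$. Continuing this way, the case $A\cap L=\{1\}\cup\{a_n:n\ge k\}$-type bounds give dimensions at least $\dim_H(J_{\{1,a_k\}})$ and at most $\dim_H(J_{\{1\}\cup\{a_n:n\ge k\}})$.

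A gap is obtained by showing that $\dim_H(J_{L'})$ is strictly smaller than $\min_k \dim_H(J_{\{1,a_k\}})$. That fails, since $\dim_H(J_{\{1,a_k\}})\to 0$ as $k\to\infty$. So the correct statement pairs the two families. Sets containing $1$ and some small element $a_j$ with $j\le k$ have dimension $\ge b:=\dim_H(J_{\{1,a_k\}})$. All other sets lie inside $L'$ or inside $\{1\}\cup\{a_n:n>k\}$, and have dimension $\le a:=\max\bigl(\dim_H(J_{L'}),\dim_H(J_{\{1\}\cup\{a_n:n>k\}})\bigr)$.

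The plan is then to choose $k$, probably $k$ small such as the digit $2$ or $3$, and verify numerically $a<b$. Rigorous two-sided bounds would come from the Falk--Nussbaum method. For the infinite alphabets the tail would be truncated, with the tail contribution controlled by $\sum_{n>N}a_n^{-2t}$, which is geometric since $a_n\sim\varphi^n$.

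I expect the comparison $\dim_H(J_{L'})<\dim_H(J_{\{1,2\}})\approx0.5313$ to be delicate. Here $L'=\{2,3,4,7,\ldots\}$ has dimension roughly comparable to $\dim_H(J_{\{2,3,4,\ldots\}})$-type values, which are likely larger. If so, I would instead take $b=\dim_H(J_{\{1,2\}}\text{-type sets})$ with more digits, or use $A\ni 1,2$ versus $A\not\ni 2$. The key design choice is to find a dichotomy, based on which small digits ($1$, $2$, $3$) are in $A$, for which the numerics separate. Finally, $b<\dim_H(J_L)$ holds by strict monotonicity, as in part (i).
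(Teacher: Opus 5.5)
Your part (i) is correct, by a slightly different route from the paper's. The paper runs the break-point argument of Theorem~\ref{th3.3} on $L$ itself. You instead pass to the subsystem $L\setminus\{1,2\}=F_{(3,4)}$, which is exactly the set $G$ of Theorem~\ref{th3.4}, so $[0,\dim_H(J_{F_{(3,4)}})]\subset DS(L)$. The strict inequality then needs no pressure argument, since $\dim_H(J_{F_{(3,4)}})\le 0.49<0.5312\le\dim_H(J_{\{1,2\}})$ by Theorem~\ref{th3.4} and Table~\ref{tab1}.

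Part (ii) has a genuine gap. Your dichotomy puts on the ``large'' side only sets containing $1$ and \emph{at least one} further small digit. So its lower bound is $b=\dim_H(J_{\{1,a_k\}})\le\dim_H(J_{\{1,2\}})\approx0.5313$ for every $k$. The ``small'' side, however, always contains $L'=L\setminus\{1\}=\{2,3,4,7,11,\ldots\}$, whose dimension is numerically about $0.59$. Hence $a>b$ for every choice of $k$; the failure is structural, not a matter of delicate numerics. Your fallbacks fail for the same reason. For instance, with ``$A\ni1,2$ versus $A\not\ni2$'' the small side contains $\{1,3,4,7\}$, which has dimension $\ge0.660$ by Table~\ref{tab1}, while the large side only guarantees about $0.531$. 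Splitting according to which small digits occur cannot produce a gap unless the large side is forced to contain \emph{all} of them.

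The missing idea is Lemma~\ref{l4.5}. The large side must consist of sets containing an entire initial segment $\{a_1,\dots,a_k\}$ of $L$, and the small side of all sets missing at least one of these digits. The small side is then dominated by a single set:
\begin{itemize}
\item if $a_k\notin A$, then $A\subseteq L\setminus\{a_k\}$ directly;
\item if $a_k\in A$ but $a_i\notin A$ for some $i<k$, replace $a_k$ by $a_i$. The resulting set lies in $L\setminus\{a_k\}$ and admits a non-decreasing bijection onto $A$, so Proposition~\ref{p2.4} gives $\dim_H(J_A)\le\dim_H(J_{L\setminus\{a_k\}})$.
\end{itemize}
What remains is a genuine numerical fact: you must exhibit $k$ with $\dim_H(J_{L\setminus\{a_k\}})<\dim_H(J_{\{a_1,\dots,a_k\}})$. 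The paper takes $a_k=4$ and shows $\dim_H(J_{L\setminus\{4\}})\le0.7867<0.7889\le\dim_H(J_{\{1,2,3,4\}})$. It uses rigorous Falk--Nussbaum bounds for the finite truncation and the tail estimate of Theorem~\ref{th4.1}, with distortion constant $4$ since $1\in L$. Your proposal leaves both the correct dichotomy and this verification open, so (ii) is not yet proved.
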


We recall that the set of continued fraction expansions can be described by an infinite conformal iterated function system (CIFS). Let $\Lambda=\{a_1,a_2,\ldots\}\subseteq\N$ with $a_1<a_2<\cdots$. Let $X$ be the closed interval $\left[0,\frac{1}{a_1}\right]$. For $a_i\in\Lambda$, define $\phi_i(x)=\frac{1}{a_i+x}$. It is easy to verify that $\phi_i(X)\subset X$ for every $i\in\N$. Note that if $a_1=1$, then $\phi_1$ is not a contraction map on $X$. Thus, we consider the system of second level maps, that is $\{\phi_i\circ\phi_j:i,j\in\N\}$. Then the system $S:=\{\phi_i\circ\phi_j:i,j\in\N\}$ satisfies the properties of a CIFS; for a detailed discussion on CIFS and the associated limit set, see \cite{MR1387085,MR1487636,MR2846362}. It can be shown that $J_{\Lambda}$ is precisely the limit set for this CIFS.\par
The Hausdorff dimension of a set of continued fraction expansions can be estimated using Perron–Frobenius operators. Although these operators are defined in a more general setting in the literature, see \cite{MR1806843,MR2846362}, we restrict ourselves to the continued fraction case and define them accordingly. Let $\Lambda=\{a_1,a_2,\ldots\}\subseteq\N$ with $a_1<a_2<\ldots$, and $X=\left[0,\frac{1}{a_1}\right]$. For $s\geq0$, the Perron-Frobenius operator associated with $\Lambda$ is defined as 
$$(L_{s,\Lambda}f)(x)=\suml_{i\geq1}\left(\frac{1}{a_i+x}\right)^{2s}f\left(\frac{1}{a_i+x}\right)$$ for $f\in C(X)$ and $x\in X$, where $C(X)$ denotes the Banach space of continuous real-valued functions defined on $X$ with sup norm. It is known that $L_{s,\Lambda}:C(X)\to C(X)$ is a positive bounded linear operator for all $s>s_0$, where $s_0=\inf\{s\geq0:\suml_{i=1}^{\infty}a_i^{-2s}<\infty\}.$ In the case where $\Lambda\subseteq F_{(a_1,a_2)}$ we have that $s_0=0$. The relation between the Hausdorff dimension of $J_{\Lambda}$ and the spectral radius of $L_{s,\Lambda}$ was investigated in \cite{MR3827801,MR1806843} when $\Lambda$ is finite, and in \cite{MR2846362} when $\Lambda$ is infinite. We recall below the results that will be important for our analysis.
\begin{theorem}\textup{(\cite[Theorem 3.1]{MR3827801})}\label{th1.4}
    Let $\Lambda=\{a_1,a_2,\ldots,a_p\}$ be a finite subset of $\N$ with $a_1<a_2<\ldots<a_p$, and $X=\left[0,\frac{1}{a_1}\right]$. Let $s>0$. Then the following assertions hold.
    \begin{enumerate}
        \item The operator $L_{s,\Lambda}:C(X)\to C(X)$,  defined as above, has a unique, strictly positive eigenvector $v_{s,\Lambda}$ with $L_{s,\Lambda}v_{s,\Lambda}=\lambda_{s,\Lambda}v_{s,\Lambda}$, where  $\lambda_{s,\Lambda}>0$, and $\lambda_{s,\Lambda}$ is the spectral radius of $L_{s,\Lambda}$, denoted as $r(L_{s,\Lambda})$. Moreover, the map $s\to\lambda_{s,\Lambda}$ is strictly decreasing and continuous.
        \item The function $v_{s,\Lambda}$ is decreasing on $X$, and for all $x,y\in X$, $$ v_{s,\Lambda}(x)\leq v_{s,\Lambda}(y)e^{\frac{2s|x-y|}{a_1}}.$$
        \item The unique value $s$ such that $\lambda_{s,\Lambda}=1$ is equal to $\dim_H(J_{\Lambda})$.
    \end{enumerate}
\end{theorem}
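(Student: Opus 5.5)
The plan is to get everything from the iterates $L_{s,\Lambda}^n\mathbf 1$, using the explicit form of compositions of the maps $\phi_i(x)=1/(a_i+x)$. For a word $w=(i_1,\dots,i_n)$ write $\phi_w=\phi_{i_1}\circ\cdots\circ\phi_{i_n}$. If $p_n/q_n$ are the convergents of $[a_{i_1},\dots,a_{i_n}]$, then
$$|\phi_w'(x)|=\frac{1}{(q_n+q_{n-1}x)^2},$$
and hence
$$(L_{s,\Lambda}^n f)(x)=\sum_{|w|=n}|\phi_w'(x)|^{s}\,f(\phi_w(x)).$$
Two properties of each term $g_w(x)=(q_n+q_{n-1}x)^{-2s}$ are immediate. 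First, $g_w$ is decreasing on $X$. Second, since $q_{n-1}/q_n=[a_{i_n},\dots,a_{i_1}]\le 1/a_{i_n}\le 1/a_1$, we get
$$\frac{g_w(x)}{g_w(y)}=\Bigl(1+\tfrac{q_{n-1}}{q_n+q_{n-1}x}(y-x)\Bigr)^{2s}\le e^{2s|x-y|/a_1}.$$

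Step 1 constructs $v_{s,\Lambda}$. Let $K_M=\{f\in C(X): f>0,\ f(x)\le f(y)e^{M|x-y|}\ \forall x,y\}$ with $M=2s/a_1$. By the computation above, $L^n\mathbf 1$ lies in $K_M$ for every $n$ and is a decreasing function. I would check that $L_{s,\Lambda}$ maps $K_M$ into itself. The weight contributes the factor above, and $f\circ\phi_i$ contributes $e^{M|\phi_i(x)-\phi_i(y)|}$ with $|\phi_i(x)-\phi_i(y)|\le|x-y|/a_1^2$. When $a_1=1$ this contraction fails, so I would pass to $L^2$, using the second-level system as the paper does. I would then show that the normalised slice $\{f\in K_M:\|f\|=1\}$ has finite diameter in Hilbert's projective metric and is mapped into a strictly smaller cone $K_{M'}$ with $M'<M$. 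Birkhoff's contraction theorem then makes $L_{s,\Lambda}$ (or $L^2_{s,\Lambda}$) a strict contraction in that metric. This gives a unique positive eigenvector $v_{s,\Lambda}$ in $K_M$, together with the convergence $L^n\mathbf 1/\|L^n\mathbf 1\|\to v_{s,\Lambda}$. Part (2) follows because both decreasingness and the $K_M$ bound pass to uniform limits. Uniqueness among all strictly positive eigenvectors follows by comparing any such eigenvector with $v$ via the standard ratio argument. Since $c\,v\le\mathbf 1\le C\,v$, we get $\|L^n\|=\|L^n\mathbf 1\|\asymp\lambda^n$, so Gelfand's formula gives $\lambda_{s,\Lambda}=r(L_{s,\Lambda})$.

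Step 2 proves monotonicity and continuity in $s$. The formula $\lambda_s=\lim_n\bigl(\sum_{|w|=n}g_w(0)\bigr)^{1/n}$ holds, since by Step 1 $L^n\mathbf 1(x)$ is comparable to $L^n\mathbf 1(0)$ uniformly in $n$. For $s<t$ each term satisfies $g_w^{(t)}\le q_n^{-2(t-s)}g_w^{(s)}$. Because $q_n\ge c\,\gamma^n$ with $\gamma>1$ (growth of continuants, valid even when $a_1=1$), this yields $\lambda_t\le\gamma^{-2(t-s)}\lambda_s<\lambda_s$. By Hölder's inequality $s\mapsto\log\lambda_s$ is convex, and a finite convex function on $(0,\infty)$ is continuous.

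Step 3 is the Bowen formula, $\lambda_s=1\iff s=\dim_H J_\Lambda$. The cylinder $\phi_w(X)$ has length comparable to $q_n^{-2}\asymp g_w(0)^{1/s}$. For the upper bound: if $\lambda_s<1$, then $\sum_{|w|=n}|\phi_w(X)|^s\asymp L^n\mathbf 1(0)\to0$, and these cylinders cover $J_\Lambda$, so $\dim_H J_\Lambda\le s$. For the lower bound when $\lambda_s=1$, I would build the Gibbs measure $\mu$ on $J_\Lambda$ from the eigenfunctional of the adjoint, which satisfies $\mu(\phi_w(X))\asymp|\phi_w'|^s$. Using the bounded distortion from Step 1 and the gaps between cylinders of a finite alphabet, I would show $\mu(B(x,r))\lesssim r^s$, and the mass distribution principle then gives $\dim_H J_\Lambda\ge s$. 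I expect the main obstacle to be the projective-contraction estimate in Step 1 when $a_1=1$, since the bounds there have to be handled through second iterates; the Gibbs-measure ball estimate in Step 3 is the other delicate point.
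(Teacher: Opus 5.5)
\textbf{The failing step.} The cone invariance in Step~1 does not hold. With $M=2s/a_1$ you claim that $L=L_{s,\Lambda}$ maps $K_M$ into itself, and indeed into some $K_{M'}$ with $M'<M$. You also claim that second iterates are needed only when $a_1=1$. But the two factors you list multiply, for the $i=1$ term, to $e^{(2s/a_1+M/a_1^2)|x-y|}=e^{M(1+a_1^{-2})|x-y|}$. The weight $(a_1+x)^{-2s}$ alone already has logarithmic Lipschitz constant exactly $2s/a_1=M$ on $X$ (attained at $x=0$). So nothing is left for the factor $f\circ\phi_1$, however strongly $\phi_1$ contracts.

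This is a genuine failure, not just a lossy estimate. Take $\Lambda=\{a\}$ and $f(x)=e^{Mx}\in K_M$. Then $(Lf)(x)=(a+x)^{-2s}e^{M/(a+x)}$, and $-\frac{d}{dx}\log(Lf)(x)\big|_{x=0}=M(1+a^{-2})>M$, so $Lf\notin K_M$. The same happens, for example, with $\Lambda=\{2,3\}$. So for $a_1\ge 2$ your Birkhoff argument has no invariant cone as set up. The obstacle you anticipated, non-contractivity of $\phi_1$ when $a_1=1$, is not the real one.

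\textbf{The repair.} Your own continuant formulas fix this: run the contraction argument for $L^2$, for every $a_1$. For $w=(i_1,i_2)$ one has $q_1=a_{i_1}$ and $q_2=a_{i_1}a_{i_2}+1$. The weight $g_w$ has logarithmic Lipschitz constant at most $2sq_1/q_2$, and $\phi_w$ is $q_2^{-2}$-Lipschitz. Hence $L^2(K_M)\subset K_{\theta M}$ with
\[
\theta=\max_{|w|=2}\Bigl(\frac{a_1q_1}{q_2}+\frac{1}{q_2^{2}}\Bigr)\le\max_{|w|=2}\Bigl(1-\frac{1}{q_2}+\frac{1}{q_2^{2}}\Bigr)<1,
\]
using $a_1q_1\le q_2-1$, $q_2\ge 2$, and the finiteness of $\Lambda$. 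Alternatively, for $a_1\ge 2$, use $L$ on $K_{M_0}$ with $M_0>2sa_1/(a_1^2-1)$. You then recover the sharp constant in part (2) from $L^n\mathbf 1\in K_{2s/a_1}$, as you already do. Since $\mathbf 1$ and $L\mathbf 1$ both lie in $K_M$, the even and odd normalised iterates converge to the same ray. So the limit is an eigenvector of $L$ itself, not just of $L^2$.

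\textbf{Two smaller corrections.} First, the normalised slice of $K_M$ does not have finite Hilbert diameter: $e^{-Mx}$ lies on the boundary of $K_M$ and is at infinite distance from $\mathbf 1$. What Birkhoff's theorem needs is that $K_{\theta M}$ has finite diameter in the Hilbert metric of $K_M$, which is the standard estimate for these cones. Second, in Step~3 the sets $\phi_w(X)$ need not be separated. For $\Lambda=\{1,2\}$, $\phi_1(X)$ and $\phi_2(X)$ share the point $\tfrac12$. Take gaps relative to $\phi_w(I)$, with $I$ the convex hull of $J_\Lambda$, or use the open-set-condition counting argument instead.

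With these changes the rest of your plan is sound: Gelfand's formula, strict monotonicity from $q_n\ge c\gamma^n$, continuity from convexity of $s\mapsto\log\lambda_s$, and Bowen's formula via covers and the conformal measure. The paper gives no proof to compare against, since it quotes this result from Falk--Nussbaum.
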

In the case when $|\Lambda|=\infty$, the following statements are proved in \cite[Lemma 5.4, Theorem 5.11]{MR2846362}.
\begin{theorem}\label{th1.5}
Let $\Lambda\subseteq\N$ with $|\Lambda|=\infty$, and  let $\sigma_{\infty}=\inf\{s>0:r(L_{s,\Lambda})<1\}$.
    \begin{enumerate}
        \item The map $s\to r(L_{s,\Lambda})$ is continuous and strictly decreasing for $s>s_0$.
        \item $\dim_H(J_{\Lambda})=\sigma_{\infty}$.
    \end{enumerate}
\end{theorem}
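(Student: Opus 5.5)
The statement is quoted from \cite[Lemma 5.4, Theorem 5.11]{MR2846362}. To prove it, I would express $r(L_{s,\Lambda})$ through the topological pressure of the conformal system and then run the standard covering and finite-approximation arguments.

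\emph{Step 1: spectral radius as pressure.} For $n\in\N$ and $x\in X$ we have
$$(L_{s,\Lambda}^n\mathbf 1)(x)=\suml_{|\omega|=n}|\phi_\omega'(x)|^{s},$$
where $\phi_\omega=\phi_{\omega_1}\circ\cdots\circ\phi_{\omega_n}$. Since $L_{s,\Lambda}$ is positive, Gelfand's formula gives
$$r(L_{s,\Lambda})=\lim_n\|L_{s,\Lambda}^n\mathbf 1\|_\infty^{1/n}.$$
The bounded distortion inequality (the analogue of Theorem \ref{th1.4}(2), which holds for the maps $x\mapsto 1/(a+x)$ uniformly in the word) makes $|\phi_\omega'(x)|$ comparable to $\|\phi_\omega'\|_\infty$ up to a constant $K^s$ independent of $n$. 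Hence
$$\log r(L_{s,\Lambda})=P(s):=\lim_n\frac1n\log\suml_{|\omega|=n}\|\phi_\omega'\|_\infty^s,$$
and this limit exists by submultiplicativity. It is finite exactly when $\suml_i a_i^{-2s}<\infty$, that is, for $s>s_0$.

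\emph{Step 2: part (1).} Because second-level compositions are uniform contractions, $\|\phi_\omega'\|_\infty\le C\gamma^{n}$ for some $\gamma<1$, which handles the case $a_1=1$. Therefore, for $s<t$,
$$P(t)\le P(s)-(t-s)\log(1/\gamma),$$
which is strict decrease. Hölder's inequality applied to $\sum\|\phi_\omega'\|^{s}$ shows that $P$ is convex on $(s_0,\infty)$. A finite convex function is continuous on the interior of its domain, so $s\mapsto r(L_{s,\Lambda})=e^{P(s)}$ is continuous there.

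\emph{Step 3: part (2), upper bound.} Suppose $r(L_{s,\Lambda})<1$. The cylinders $\phi_\omega(X)$ with $|\omega|=n$ cover $J_\Lambda$ and have diameters $\asymp\|\phi_\omega'\|_\infty\le C\gamma^n$. Their $s$-sums are $\lesssim\|L^n_{s,\Lambda}\mathbf 1\|_\infty\to0$, so $\mathcal H^s(J_\Lambda)=0$. Hence $\dim_H J_\Lambda\le s$, and taking the infimum gives $\dim_H J_\Lambda\le\sigma_\infty$.

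\emph{Step 4: part (2), lower bound.} Take $s_0<s<\sigma_\infty$, so $r(L_{s,\Lambda})\ge1$. Strict decrease at a slightly smaller $s'\in(s_0,s)$ gives $P(s')>0$. Let $\Lambda_N=\{a_1,\dots,a_N\}$. The key claim is
$$P_{\Lambda_N}(s')\uparrow P_\Lambda(s'),$$
i.e.\ the pressure of the full system is the supremum of the pressures of its finite subsystems. Granting it, $P_{\Lambda_N}(s')>0$ for large $N$. By Theorem \ref{th1.4}(1),(3), this forces $\dim_H J_{\Lambda_N}>s'$. Since $J_{\Lambda_N}\subseteq J_\Lambda$, we get $\dim_H J_\Lambda\ge s'$, and letting $s'\to\sigma_\infty$ completes the proof.

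\emph{Main obstacle.} The approximation claim in Step 4 is the delicate part. I would prove it through the Perron eigenfunctions: the operators $L_{s',\Lambda_N}$ increase to $L_{s',\Lambda}$ with tails $\suml_{i>N}a_i^{-2s'}\to0$. The eigenfunctions $v_{s',\Lambda_N}$, normalized, are uniformly log-Lipschitz by Theorem \ref{th1.4}(2), so by Arzelà–Ascoli they have a subsequential limit $v>0$. Passing to the limit shows $L_{s',\Lambda}v=\lambda v$ with $\lambda=\lim_N\lambda_{s',\Lambda_N}$. A positive eigenfunction forces $\lambda=r(L_{s',\Lambda})$, which gives the claim.
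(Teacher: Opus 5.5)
The proposal has a genuine but easily repaired gap in Step 4. Your Steps 1--3 are correct, and so is the Arzel\`a--Ascoli argument giving $r(L_{s',\Lambda_N})\uparrow r(L_{s',\Lambda})$ for $s'>s_0$. The paper gives no proof of its own here, only the citation to \cite{MR2846362}, so your argument has to stand alone.

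Step 4 starts from some $s$ with $s_0<s<\sigma_\infty$. If $\sigma_\infty=s_0>0$, this interval is empty and you obtain no lower bound at all. This case does occur for infinite $\Lambda\subseteq\N$. Your remark in Step 1 that $P$ is finite ``exactly for $s>s_0$'' overlooks that $\sum_i a_i^{-2s_0}$ may converge. When it does, $r(L_{s,\Lambda})$ need not reach $1$ for any $s\ge s_0$.

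For instance, take $a_i=\lfloor C\,i^2\log^4(i+1)\rfloor$ with $C$ large. Then $\sum_i a_i^{-2s}$ diverges for $s<\frac14$ but converges at $s=\frac14$, so $s_0=\frac14$, and moreover $\sum_i a_i^{-1/2}<1$. Hence
\[
r(L_{s,\Lambda})\le\|L_{s,\Lambda}\mathbf 1\|_\infty=\sum_i a_i^{-2s}\le\sum_i a_i^{-1/2}<1\qquad\text{for all } s\ge\tfrac14 .
\]
So $\sigma_\infty=s_0=\frac14$, and Step 4 never produces the required inequality $\dim_H J_\Lambda\ge\frac14$.

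The missing step is to use the divergence of $\sum_i a_i^{-2s}$ below $s_0$ directly on the finite subsystems. For $0<s<s_0$ and $x\in[0,1/a_1]$,
\[
(L_{s,\Lambda_N}\mathbf 1)(x)=\sum_{i\le N}(a_i+x)^{-2s}\ge\sum_{i\le N}(2a_i)^{-2s}\longrightarrow\infty\qquad(N\to\infty).
\]
Lemma~\ref{l2.7} with $u\equiv1$ then gives $r(L_{s,\Lambda_N})>1$ for large $N$. Theorem~\ref{th1.4} then gives $\dim_H J_{\Lambda_N}>s$. Thus $\dim_H J_\Lambda\ge s_0$ for every infinite $\Lambda$, which handles the case $\sigma_\infty=s_0$. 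Together with your Step 4 when $\sigma_\infty>s_0$, this yields $\dim_H J_\Lambda\ge\sigma_\infty$ in general.

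For the sets actually used in the paper, $\Lambda\subseteq F_{(a_1,a_2)}$, one has $s_0=0<\sigma_\infty$, so your argument already covers them as written.
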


\section{Preliminary Results}
In this section, we recall some preliminary results that we will use in our work. First, we prove two observatory lemmas about the Fibonacci sequence that will be used throughout the paper.
\begin{lemma}\label{l2.1}
Let $F_{(a_1,a_2)}$ be the generalized Fibonacci sequence starting with the positive integers $a_1<a_2$. Let $k\geq2$ be an integer. Then $$\frac{a_k}{a_{k+j}}\geq \frac{1}{2^j}\ \forall\ j\in\N.$$
\begin{proof}
   Observe that for any $n\geq3$, $a_n=a_{n-1}+a_{n-2}\leq 2a_{n-1}$. Thus, we have $a_{k+j}\leq 2a_{k+j-1}\leq 2^2a_{k+j-2}\leq\cdots\leq 2^ja_{k}$. 
\end{proof}
\end{lemma}
\begin{lemma}\label{l2.2}
   Let $F_{(a_1,a_2)}$ be the generalized Fibonacci sequence starting with the positive integers $a_1<a_2$. Let $k\geq2$ be an integer. Then, for each $j\in\N$, we have  $$\frac{a_k}{a_{k+j}}\geq \min\left\{\frac{a_2}{a_{2+j}},\frac{a_3}{a_{3+j}}\right\}.$$ 
   \begin{proof}
       We will show that either the even subsequence of $\left\{\frac{a_k}{a_{k+j}}\right\}_{k\geq1}$ is increasing and the odd subsequence of $\left\{\frac{a_k}{a_{k+j}}\right\}_{k\geq1}$ is decreasing, or vice versa. Then the result follows due to the convergence of the sequence $\left\{\frac{a_k}{a_{k+j}}\right\}_{k\geq1}$. We first consider the case  $j=1$. In this case, we have
       \begin{align*}
           \frac{a_{k+2}}{a_{k+3}}-\frac{a_k}{a_{k+1}}&=\frac{a_{k+2}a_{k+1}-a_ka_{k+3}}{a_{k+3}a_{k+1}}\\
           &=\frac{a_{k+1}^2+a_{k}a_{k+1}-a_ka_{k+3}}{a_{k+3}a_{k+1}}\\
           &=\frac{a_{k+1}^2-a_ka_{k+2}}{a_{k+3}a_{k+1}}\\
           &=\frac{(-1)^{k+1}D}{a_{k+3}a_{k+1}}
       \end{align*}
       where $D=a_2^2-a_1a_2-a_1^2$. Thus, in the case where $D>0$, the odd subsequence is increasing while the even subsequence is decreasing. Conversely, when $D<0$, the odd subsequence decreases and the even subsequence increases. Without loss of generality, assume that 
       $$ \frac{a_{k+2}}{a_{k+2+n}}-\frac{a_k}{a_{k+n}}>0$$
       for all $n\leq j$. Now, for $n=j+1$, we have
       \begin{align*}
           \frac{a_{k+2+(j+1)}}{a_{k+2}}&=\frac{a_{k+2+j}}{a_{k+2}}+\frac{a_{k+2+j-1}}{a_{k+2}}\\
           &<\frac{a_{k+j}}{a_{k}}+\frac{a_{k+j-1}}{a_{k}}\\
           &=\frac{a_{k+(j+1)}}{a_{k}}.
       \end{align*}
       Thus, we have $$ \frac{a_{k+2}}{a_{k+2+(j+1)}}-\frac{a_k}{a_{k+(j+1)}}>0.$$
       This completes the proof.
   \end{proof}
\end{lemma}
Now, we state some well-known results related to the Hausdorff dimension of the set of continued fraction expansions with digits restricted to a subset of $\N$. The proofs of the following results can be found in \cite{CHITANGA2026,MR4068257,MR4461210,MR1387085,MR4963813}.
\begin{theorem}\textup{(\cite{MR1387085})}\label{th2.3}
    Let $F$ be an infinite subset of $\N$. If $\{F_n\}_{n\geq1}$ is a nested sequence of finite subsets of $F$ and $\cup_{n}F_n=F$, then $$\lim\limits_{n}\dim_H(J_{F_n})=\dim_H(J_{F}).$$
\end{theorem}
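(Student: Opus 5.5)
This is the Mauldin–Urbański continuity result (Theorem~\ref{th2.3}), and I would prove it in three steps using the CIFS framework and the Perron–Frobenius characterization from Theorems~\ref{th1.4} and \ref{th1.5}.

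\emph{Upper bound.} Since $F_n\subseteq F_{n+1}\subseteq F$, we have $J_{F_n}\subseteq J_{F_{n+1}}\subseteq J_F$. Monotonicity of Hausdorff dimension then shows that $\dim_H(J_{F_n})$ is nondecreasing and bounded by $\dim_H(J_F)$. So the limit $L$ exists and $L\le \dim_H(J_F)$.

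\emph{Lower bound.} Here I would use the pressure characterization. Write $r_n(s)=r(L_{s,F_n})$ and $r(s)=r(L_{s,F})$. Fix $s<\dim_H(J_F)=\sigma_\infty$. By Theorem~\ref{th1.5}, $r(s)>1$ whenever $s>s_0$; if $s\le s_0$ the claim is trivial as soon as $F_n$ has at least two elements, or one can reduce to $s$ slightly above $s_0$. The aim is to show $r_n(s)\to r(s)$ as $n\to\infty$. Then $r_n(s)>1$ for large $n$, and Theorem~\ref{th1.4}(1),(3) give $\dim_H(J_{F_n})>s$. Letting $s\uparrow\dim_H(J_F)$ yields $L\ge\dim_H(J_F)$.

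\emph{Convergence of spectral radii (main obstacle).} One direction is easy. Positivity gives $L_{s,F_n}\le L_{s,F}$ pointwise on positive functions, so $r_n(s)\le r(s)$. For the reverse direction I would use the bounded distortion in Theorem~\ref{th1.4}(2): the maps $\phi_i$ satisfy $|\phi_i'(x)|/|\phi_i'(y)|\le e^{2|x-y|/a_1}$ uniformly in $i$. This gives
\[
r(L_{s,\Lambda})=\lim_{k\to\infty}\Bigl(\sum_{|\omega|=k}\|\phi_\omega'\|^s\Bigr)^{1/k},
\]
with the $k$-th term within a factor $K^{s}$, independent of $k$ and $\Lambda$, of $\bigl(\sum_{|\omega|=k}\|\phi_\omega'\|^s\bigr)$ itself. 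The partition function $Z_k(\Lambda)=\sum_{|\omega|=k}\|\phi_\omega'\|^s$ is therefore supermultiplicative up to the constant $K^{-s}$, so $r(L_{s,\Lambda})\ge (K^{-s}Z_k(\Lambda))^{1/k}$ for every $k$. Given $\varepsilon>0$, I would pick $k$ with $(K^{-s}Z_k(F))^{1/k}>r(s)-\varepsilon$. Then I would approximate the possibly infinite sum $Z_k(F)$, which converges for $s>s_0$, by its finite partial sums over words in $F_n^k$. Since $F_n\uparrow F$, monotone convergence gives $Z_k(F_n)\to Z_k(F)$, so $r_n(s)\ge r(s)-2\varepsilon$ for large $n$.

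The delicate points are checking the uniform distortion constant; when $a_1=1$ one should pass to second-level maps as in the introduction. One also has to handle the case $r(s)=\infty$, where the same argument shows $r_n(s)\to\infty$.
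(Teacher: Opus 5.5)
The paper gives no proof of this statement; it simply cites Mauldin--Urba\'nski. Your argument is correct and is essentially theirs in operator language: $\log r(L_{s,\Lambda})$ is the topological pressure $P_\Lambda(s)$, your supermultiplicativity-plus-monotone-convergence step is their proof that the pressure of an infinite system is the supremum of the pressures of its finite subsystems, and Theorem~\ref{th1.4} plays the role of Bowen's formula for the finite $F_n$. You can also settle the uniform distortion directly. For a word $\omega$ of length $k$ with continuants $q_{k-1}\le q_k$, one has $|\phi_\omega'(x)|=(q_k+q_{k-1}x)^{-2}$, which gives $K=4$ with no passage to second-level maps.

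There are two points to tighten.

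\textbf{What you import from Theorem~\ref{th1.5}(2).} You use only the inequality $\dim_H(J_F)\le\sigma_\infty$. Once you have $r(L_{s,F_n})\to r(L_{s,F})$, you also have $\dim_H(J_{F_n})\to\sigma_\infty$. From that point the statement you are proving is equivalent to this inequality, since $\dim_H(J_F)\ge\sigma_\infty$ then holds trivially. So quoting it hands over all of the dimension-theoretic content. It also risks circularity, because the infinite-alphabet formula $\dim_H(J_F)=\sigma_\infty$ is naturally derived from exactly this approximation theorem. It is better to prove the inequality directly, which is the other half of the Mauldin--Urba\'nski argument:
\begin{itemize}
\item[(a)] For $s>\sigma_\infty$ you have $r(L_{s,F})<1$, so $Z_k(F)\le K^{s}\|L_{s,F}^k\mathbf{1}\|_\infty\to 0$.
\item[(b)] The cylinders cover the limit set: $J_F\subseteq\bigcup_{\omega\in F^k}\phi_\omega(X)$.
\item[(c)] Their sizes are controlled: $\operatorname{diam}\phi_\omega(X)\le\|\phi_\omega'\|_\infty\le q_k(\omega)^{-2}$, which tends to $0$ uniformly in $\omega\in F^k$.
\end{itemize}
Hence $\sum_{\omega\in F^k}(\operatorname{diam}\phi_\omega(X))^s\le Z_k(F)\to0$ with mesh tending to $0$. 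This gives $\mathcal{H}^s(J_F)=0$, and so $\dim_H(J_F)\le s$.

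\textbf{The case $s\le s_0$.} Your aside that this case is trivial once $|F_n|\ge 2$ is valid only when $s_0=0$. Reducing to some $s$ slightly above $s_0$ is impossible when $s_0=\dim_H(J_F)$, and this does happen for sufficiently sparse $F$ (irregular systems). The route in your last sentence is the right one. For $s<s_0$ you have $Z_1(F_n)\uparrow\sum_{i\in F}\|\phi_i'\|_\infty^{s}=\infty$. Together with $r(L_{s,F_n})\ge K^{-s}Z_1(F_n)$, this gives $r(L_{s,F_n})\to\infty$.
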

\begin{proposition}\textup{(\cite{MR4068257})}\label{p2.4}
    If $\Lambda_1,\Lambda_2\subset\N$ and there is a non-decreasing bijection $f:\Lambda_1\to\Lambda_2$, then $$\dim_H(J_{\Lambda_2})\leq\dim_H(J_{\Lambda_1}).$$
\end{proposition}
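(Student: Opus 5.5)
The plan is to compare the two Perron--Frobenius operators through their iterates, using the fact that continuants are monotone in each digit. Write $\Lambda_1=\{b_1<b_2<\cdots\}$ and $\Lambda_2=\{c_1<c_2<\cdots\}$. I read the hypothesis in the sense needed for the inequality, namely $f(b_i)=c_i\geq b_i$ for every $i$ (``non-decreasing'' meaning $f(a)\geq a$). For a word $\omega=(d_1,\dots,d_n)$ of digits, let $q_n(\omega)$ and $q_{n-1}(\omega)$ be the usual continuants. Put $\phi_\omega=\phi_{d_1}\circ\cdots\circ\phi_{d_n}$, where $\phi_d(x)=1/(d+x)$. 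Then for $x\geq 0$,
$$|\phi_\omega'(x)|=\frac{1}{\bigl(q_n(\omega)+x\,q_{n-1}(\omega)\bigr)^2}.$$
Iterating the operator gives
$$(L^n_{s,\Lambda}\mathbf 1)(x)=\sum_{\omega\in\Lambda^n}\bigl(q_n(\omega)+xq_{n-1}(\omega)\bigr)^{-2s}.$$

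\textbf{Step 1 (monotonicity of continuants).} The continuants $q_n,q_{n-1}$ are polynomials in $d_1,\dots,d_n$ with nonnegative coefficients. This follows by induction from $q_k=d_kq_{k-1}+q_{k-2}$. Hence they are non-decreasing in each digit. Applying $f$ coordinatewise gives a bijection $\Lambda_1^n\to\Lambda_2^n$, $\omega\mapsto f(\omega)$, with $q_n(f(\omega))+xq_{n-1}(f(\omega))\geq q_n(\omega)+xq_{n-1}(\omega)$. Summing over all words yields, at $x=0$,
$$(L^n_{s,\Lambda_2}\mathbf 1)(0)\leq (L^n_{s,\Lambda_1}\mathbf 1)(0)\qquad\text{for all } n\in\N,\ s>s_0 .$$
The point $x=0$ lies in both underlying intervals, so the two operators can be evaluated there without comparing the different spaces $C(X)$.

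\textbf{Step 2 (spectral radius via the value at $0$).} For a positive operator we have $\|L^n\|=\|L^n\mathbf 1\|_\infty$, so Gelfand's formula gives $r(L)=\lim_n\|L^n\mathbf 1\|_\infty^{1/n}$. On $X$ the quantity $x\,q_{n-1}/q_n$ is bounded by a constant independent of $n$, since $q_{n-1}\leq q_n$ and $x\leq 1$. Hence $(L^n\mathbf 1)(0)\leq \|L^n\mathbf 1\|_\infty\leq (L^n\mathbf 1)(0)$ up to a factor $2^{2s}$; this is the bounded distortion estimate. Therefore $r(L_{s,\Lambda})=\lim_n (L^n_{s,\Lambda}\mathbf 1)(0)^{1/n}$. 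Combined with Step 1, this gives $r(L_{s,\Lambda_2})\leq r(L_{s,\Lambda_1})$ for every admissible $s$. When $a_1=1$ the maps need not be contractions, but the formula for the iterates and the estimate above still hold, so nothing changes.

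\textbf{Step 3 (conclusion).} Suppose first that both sets are infinite. Theorem~\ref{th1.5} gives $\dim_H(J_{\Lambda_i})=\inf\{s:r(L_{s,\Lambda_i})<1\}$. Whenever $r(L_{s,\Lambda_1})<1$ we also have $r(L_{s,\Lambda_2})<1$, which gives the inequality. Suppose instead both sets are finite. Then Theorem~\ref{th1.4} identifies the dimension as the unique zero of $s\mapsto\lambda_{s,\Lambda}-1$, which is strictly decreasing. The comparison of spectral radii then again gives the inequality. Alternatively, one can reduce everything to the finite case: take the initial segments $\{b_1,\dots,b_n\}$ and $\{c_1,\dots,c_n\}$ and pass to the limit using Theorem~\ref{th2.3}.

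The main obstacle I expect is Step 2: justifying that the spectral radius on $C(X)$ is detected by the value at the single point $0$, uniformly for both sets, including the non-contractive case $a_1=1$. The explicit continuant formula for $\phi_\omega'$ should make this routine.
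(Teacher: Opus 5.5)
Your argument is correct. The paper gives no proof of this proposition; it only quotes it from \cite{MR4068257}, so there is no internal proof to compare against. What you have written is a self-contained proof that uses only Theorems~\ref{th1.4} and~\ref{th1.5}, both already stated in the paper. Since $(L^n_{s,\Lambda}\mathbf{1})(0)=\sum_{\omega\in\Lambda^n}q_n(\omega)^{-2s}=\sum_{\omega\in\Lambda^n}\|\phi_\omega'\|_\infty^{s}$, your Step~2 identifies $\log r(L_{s,\Lambda})$ with the topological pressure of the system. Steps~1 and~3 are then the standard comparison of partition functions via monotonicity of continuants. Your version has the advantage of staying entirely inside the Perron--Frobenius framework the paper uses, with no CIFS or pressure machinery.

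A few remarks.

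\textbf{Step 2.} It is easier than you expect. Every term $(q_n+xq_{n-1})^{-2s}$ is decreasing in $x\ge 0$, so $\|L^n_{s,\Lambda}\mathbf{1}\|_\infty=(L^n_{s,\Lambda}\mathbf{1})(0)$ exactly. The inequality you actually need, $\|L^n\mathbf{1}\|_\infty\le (L^n\mathbf{1})(0)$, comes from this monotonicity. The distortion estimate $q_n+xq_{n-1}\le 2q_n$ you invoke only bounds $L^n\mathbf{1}$ from below, so your displayed chain is aimed at the wrong side, although the conclusion is right.

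\textbf{The hypothesis.} The literal reading ``order-preserving'' makes the statement false: the order-preserving map $\{2,3\}\to\{1,2\}$ exists, yet $\dim_H(J_{\{1,2\}})>\dim_H(J_{\{2,3\}})$. The intended meaning is therefore a bijection with $f(a)\ge a$. Note that in Lemma~\ref{l4.5} the paper applies the proposition to the map $E_i\to E$ sending $a_i\mapsto a_k$ and fixing everything else, which need not be order-preserving. Your main route never uses $f(b_i)=c_i$, only bijectivity and $f(d)\ge d$ coordinatewise, so it covers this general form. You should simply drop that identification.

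\textbf{The alternative reduction.} Your reduction to initial segments via Theorem~\ref{th2.3} does need $c_i\ge b_i$, which follows from a one-line count. If $c\in\Lambda_2$ satisfies $c<b_i$, then $c=f(b)$ with $b\le f(b)<b_i$. Hence at most $i-1$ elements of $\Lambda_2$ lie below $b_i$, and so $c_i\ge b_i$.

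\textbf{Domains in the infinite case.} The condition $f(d)\ge d$ also gives $\sum_{c\in\Lambda_2}c^{-2s}\le\sum_{b\in\Lambda_1}b^{-2s}$. So $L_{s,\Lambda_2}$ is a bounded operator whenever $L_{s,\Lambda_1}$ is. This is what makes the inclusion $\{s: r(L_{s,\Lambda_1})<1\}\subseteq\{s: r(L_{s,\Lambda_2})<1\}$ in your infinite case meaningful, and you should state it explicitly.
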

\begin{lemma}\textup{(\cite{CHITANGA2026})}\label{l2.5}
If $\Lambda\subset\N$ is finite with $|\Lambda|\geq2$, and $\sigma=\dim_H(J_{\Lambda})$, then there exists a constant $C_{\Lambda}>1$ such that for all $n\in\N\setminus\Lambda$ we have that $$\sigma+C_{\Lambda}^{-1}n^{-2\sigma}\leq\dim_H(J_{{\Lambda}\cup\{n\}})\leq\sigma+C_{\Lambda}n^{-2\sigma}.$$
\end{lemma}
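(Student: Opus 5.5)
The plan is a perturbation argument for the leading eigenvalue of the Perron--Frobenius operator, combined with a quantitative bound on how fast $s\mapsto\lambda_{s}$ decreases. Write $\Lambda'=\Lambda\cup\{n\}$ and $v=v_{\sigma,\Lambda}$ from Theorem \ref{th1.4}, so that $L_{\sigma,\Lambda}v=v$. Only finitely many $n\notin\Lambda$ satisfy $n<\min\Lambda$. For each of these, $\dim_H(J_{\Lambda'})>\sigma$ strictly (by Theorem \ref{th1.4}, since adding a digit strictly increases $\lambda_{\sigma}$). So they can be absorbed by enlarging $C_\Lambda$, and I will assume $n>a_1$, so that $X=[0,1/a_1]$ is the same interval for $\Lambda$ and $\Lambda'$. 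By Theorem \ref{th1.4}(2), $v$ is strictly positive with $\max v/\min v\le e^{2\sigma/a_1^2}$; this bounded distortion is what makes all constants uniform in $n$.

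\emph{Step 1 (eigenvalue at $s=\sigma$).} We have
\[
(L_{\sigma,\Lambda'}v)(x)=v(x)+(n+x)^{-2\sigma}v\!\left(\tfrac{1}{n+x}\right).
\]
Since $(n+x)^{-2\sigma}$ is comparable to $n^{-2\sigma}$ uniformly in $x\in X$, and $v$ has bounded distortion, this gives $(1+c\,n^{-2\sigma})v\le L_{\sigma,\Lambda'}v\le(1+C\,n^{-2\sigma})v$. For a positive operator on $C(X)$, the Collatz--Wielandt inequalities then give
\[
1+c\,n^{-2\sigma}\le\lambda_{\sigma,\Lambda'}\le 1+C\,n^{-2\sigma}.
\]

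\emph{Step 2 (rate of decrease in $s$).} This step turns the eigenvalue estimate into a dimension estimate, and I expect it to be the main obstacle, because the digit $a_1$ may equal $1$ (the first-level maps need not contract) and because the new digit $n$ is unbounded.

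For the upper bound on the dimension, I would pass to the second iterate $L_{t,\Lambda'}^2$. Its weights are $|(\phi_i\circ\phi_j)'|^{t}$, and second-level compositions contract uniformly: $|(\phi_i\circ\phi_j)'|\le q<1$ for all digits $i,j$. Hence $L^2_{t,\Lambda'}\le q^{t-\sigma}L^2_{\sigma,\Lambda'}$ pointwise on positive functions, so $\lambda_{t,\Lambda'}\le q^{(t-\sigma)/2}\lambda_{\sigma,\Lambda'}$. Taking $t=\sigma+K n^{-2\sigma}$ with $K$ large, Step 1 gives $\lambda_{t,\Lambda'}<1$. By Theorem \ref{th1.4}(3) and monotonicity in $s$, this yields $\dim_H(J_{\Lambda'})\le\sigma+Kn^{-2\sigma}$.

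For the lower bound, take $t=\sigma+\delta$ with $\delta=\kappa n^{-2\sigma}$ and $\kappa$ small, and test again against $v$. The $\Lambda$-part contributes at least $(a_p+1)^{-2\delta}v$, where $a_p=\max\Lambda$. The $n$-term contributes at least $c\,n^{-2\sigma}n^{-2\delta}v$. The factor $n^{-2\delta}=\exp(-2\kappa n^{-2\sigma}\log n)$ is bounded below uniformly in $n$, because $\sigma>0$ (recall $|\Lambda|\ge2$). Hence
\[
L_{t,\Lambda'}v\ge\bigl(1-2\delta\log(a_p+1)+c'n^{-2\sigma}\bigr)v>v
\]
for $\kappa$ small, so $\lambda_{t,\Lambda'}>1$ and $\dim_H(J_{\Lambda'})>\sigma+\kappa n^{-2\sigma}$. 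Choosing $C_\Lambda\ge\max\{K,\kappa^{-1}\}$ together with the finitely many exceptional $n$ completes the argument.
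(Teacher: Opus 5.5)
The paper gives no proof of this lemma; it imports it from \cite{CHITANGA2026}, so there is no in-paper argument to compare against. Judged on its own, your argument is correct and self-contained, and it uses only Theorem~\ref{th1.4} and Lemma~\ref{l2.7}.

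\textbf{Step 1.} This is a clean Collatz--Wielandt estimate. Its uniformity in $n$ comes from the distortion bound $\max v/\min v\le e^{2\sigma/a_1^2}$ and from $n\le n+x\le 2n$.

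\textbf{Upper bound.} Passing to the second iterate is exactly what is needed when $1\in\Lambda$. You can take $q=1/4$ explicitly, since $|(\phi_i\circ\phi_j)'(x)|=(a_i(a_j+x)+1)^{-2}\le 1/4$. Two facts make the spectral-radius comparison rigorous: $r(A)\le r(B)$ for $0\le A\le B$ on $C(X)$, because $\|A^k\|=\|A^k\mathbf{1}\|_\infty$; and $r(L^2)=r(L)^2$. Together they give $\lambda_{t,\Lambda'}\le \exp\bigl((C-\tfrac{K}{2}\ln 4)n^{-2\sigma}\bigr)$, which is $<1$ once $K>C/\ln 2$.

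\textbf{Lower bound.} This is fine provided you fix $\kappa\le 1$ first. Then the constant $c'$, which absorbs $\inf_n (n+1)^{-2\kappa n^{-2\sigma}}$, is independent of $\kappa$; this infimum is positive because $\sigma>0$. Only after that should you choose $\kappa<c'/(2\log(a_p+1))$.

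\textbf{Exceptional $n<\min\Lambda$.} Here the claim that $\dim_H(J_{\Lambda'})>\sigma$ follows from Theorem~\ref{th1.4} is not literally justified. The reason is that $L_{\sigma,\Lambda}$ and $L_{\sigma,\Lambda'}$ act on different spaces, $C([0,1/a_1])$ and $C([0,1/n])$. One line fixes it:
\begin{itemize}
\item Extend $v$ to $[0,1/n]$ by $\tilde v(x)=\sum_{a\in\Lambda}(a+x)^{-2\sigma}v\bigl(\tfrac{1}{a+x}\bigr)$. This is legitimate because each $\phi_a$, $a\in\Lambda$, maps $[0,1/n]$ into $[0,1/a_1]$.
\item Then $\tilde v$ is continuous, strictly positive, and agrees with $v$ on $[0,1/a_1]$, so $L_{\sigma,\Lambda}\tilde v=\tilde v$ on $[0,1/n]$.
\item Hence $L_{\sigma,\Lambda'}\tilde v\ge(1+\epsilon)\tilde v$ for some $\epsilon>0$, and Lemma~\ref{l2.7} gives $\lambda_{\sigma,\Lambda'}>1$, i.e.\ $\dim_H(J_{\Lambda'})>\sigma$.
\end{itemize}
For the upper bound at these finitely many $n$, simply use $\dim_H(J_{\Lambda'})\le 1\le\sigma+C_\Lambda n^{-2\sigma}$ for $C_\Lambda$ large.

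With these small additions the proof is complete.
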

We now recall some elementary lemmas from \cite{CHITANGA2026} that are useful for estimating the spectral radius of the positive operators $L_{s,\Lambda}$.
\begin{lemma}\textup{(\cite[Lemma 2.1]{CHITANGA2026})}\label{l2.6}
    Let $f,g\in C([a,b])$ be strictly positive. For each $0<\alpha<1$, there exists a $\beta\in(\alpha,1)$ such that $f+\alpha g\leq\beta(f+g)$. Similarly, for each $\alpha>1$, there exists a $\beta\in(1,\alpha]$ such that $\beta(f+g) \leq f+\alpha g$.
\end{lemma}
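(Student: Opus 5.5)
The statement is Lemma~\ref{l2.6}, and I would prove it by a direct compactness argument on $[a,b]$. Since $f$ and $g$ are continuous and strictly positive on the compact interval $[a,b]$, the ratio $h = g/(f+g)$ is continuous on $[a,b]$. Each value of $h$ lies in $(0,1)$, so $h$ attains a minimum $m>0$ and a maximum $M<1$. The inequalities in the lemma can then be rewritten pointwise in terms of $h$.

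For $0<\alpha<1$, write
\[
f+\alpha g=(f+g)-(1-\alpha)g=(f+g)\bigl(1-(1-\alpha)h\bigr)\leq (f+g)\bigl(1-(1-\alpha)m\bigr).
\]
I set $\beta=1-(1-\alpha)m$. Because $0<m<1$ and $0<1-\alpha<1$, we have $\beta<1$. Also $\beta>\alpha$, since $\beta-\alpha=(1-\alpha)(1-m)>0$. This gives $f+\alpha g\leq\beta(f+g)$ with $\beta\in(\alpha,1)$.

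For $\alpha>1$ the same identity gives
\[
f+\alpha g=(f+g)\bigl(1+(\alpha-1)h\bigr)\geq(f+g)\bigl(1+(\alpha-1)m\bigr).
\]
I set $\beta=1+(\alpha-1)m$. Then $\beta>1$ because $m>0$, and $\beta\leq\alpha$ because $m\leq 1$. Hence $\beta(f+g)\leq f+\alpha g$ with $\beta\in(1,\alpha]$.

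No step is a real obstacle. The only point to check is that $m>0$, which needs the strict positivity of $g$ together with compactness; in the first case one also uses $M<1$ (strict positivity of $f$) to see $m<1$, which is what gives $\beta>\alpha$ strictly.
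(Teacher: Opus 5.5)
Your proof is correct and complete: bounding the continuous ratio $g/(f+g)$ below by its minimum $m\in(0,1)$ and taking $\beta=1-(1-\alpha)m$, respectively $\beta=1+(\alpha-1)m$, gives exactly the required inequalities with $\beta$ in the stated ranges. The paper does not prove this lemma itself (it quotes it from \cite{CHITANGA2026}), and your compactness argument on the ratio is the natural proof of it, so nothing needs to be added.
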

\begin{lemma}\textup{(\cite[Lemma 2.2]{CHITANGA2026})}\label{l2.7}
    Let $L:C([a,b])\to C([a,b])$ be a positive linear operator. If there exists $u\in C([a,b])$ such that $u$ is strictly positive and $\alpha u\leq Lu\leq \beta u$, then $\alpha\leq r(L)\leq \beta$.
\end{lemma}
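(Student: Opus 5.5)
The statement is Lemma~\ref{l2.7}, a standard comparison principle for positive operators on $C([a,b])$. I would prove it by iterating the two-sided inequality and then applying Gelfand's spectral radius formula $r(L)=\lim_{n}\|L^n\|^{1/n}$.

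\textbf{Iterating.} Positivity of $L$ means $f\le g$ implies $Lf\le Lg$. Applying $L$ repeatedly to $\alpha u\le Lu\le\beta u$ therefore gives, by induction,
$$\alpha^n u\le L^n u\le \beta^n u\qquad\text{for all } n\in\N.$$
Since $u$ is continuous and strictly positive on the compact interval $[a,b]$, the numbers $m=\min u$ and $M=\max u$ are finite and positive.

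\textbf{Upper bound.} Let $\|f\|\le 1$. Then $-u/m\le f\le u/m$, and positivity gives
$$|L^n f|\le \frac{1}{m}L^n u\le \frac{\beta^n M}{m}.$$
Hence $\|L^n\|\le \beta^n M/m$, so $\|L^n\|^{1/n}\le \beta\,(M/m)^{1/n}$. Letting $n\to\infty$ yields $r(L)\le\beta$.

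\textbf{Lower bound.} Testing on the function $u/M$, which has norm $1$, gives
$$\|L^n\|\ge \frac{\|L^n u\|}{M}\ge \frac{\alpha^n m}{M}.$$
Taking $n$th roots and letting $n\to\infty$ yields $r(L)\ge\alpha$.

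\textbf{Remarks.} If $\alpha\le 0$, the lower bound is trivial because $r(L)\ge 0$. I do not expect a genuine obstacle. The only point needing care is that strict positivity of $u$ is essential: it is what lets $u$ dominate any unit-norm function up to the constant $1/m$, and it is what gives $\|u\|=M$ with a positive lower bound $m$.
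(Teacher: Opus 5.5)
The paper does not prove this lemma. It quotes it from \cite[Lemma 2.2]{CHITANGA2026} and uses it as a black box, so there is no in-paper argument to compare with.

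Your proof is correct and self-contained. The ingredients are:
\begin{itemize}
\item the iteration $\alpha^n u\le L^n u\le\beta^n u$;
\item the domination $|L^n f|\le m^{-1}L^n u$ for $\|f\|\le 1$;
\item the test function $u/M$.
\end{itemize}
Together they give $\alpha (m/M)^{1/n}\le\|L^n\|^{1/n}\le\beta (M/m)^{1/n}$, and Gelfand's formula then yields $\alpha\le r(L)\le\beta$. Your $n=1$ estimate also shows that $L$ is bounded, so Gelfand's formula applies. This is the standard argument. Equivalently, one uses the order-unit norm $\|f\|_u=\inf\{c: -cu\le f\le cu\}$, which is equivalent to the sup norm exactly because $u$ is strictly positive, and satisfies $\|L^n\|_u=\|L^n u\|_u$.

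One small point should be made explicit. The induction step $L^{n+1}u\le\beta^n Lu\le\beta^{n+1}u$ uses $\beta\ge 0$. This is automatic, since $0\le Lu\le\beta u$ with $u>0$. The lower iteration needs $\alpha>0$, which you already arrange by dismissing the case $\alpha\le 0$ separately.
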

In the study of the dimension spectrum, we often require lower and upper bounds for the Hausdorff dimension of sets of continued fraction expansions with digits restricted to a subset of $\N$. The following lemma plays an important role in obtaining these estimates.
\begin{lemma}\textup{(\cite[Lemma 4.1]{CHITANGA2026})}\label{l2.8}
    Let $n\in\N$ and $s\geq0$. If $L_{s,\{n\}}:C\left([0,\frac{1}{n}]\right)\to C\left([0,\frac{1}{n}]\right)$ is defined as $$(L_{s,\{n\}}f)(x)=\left(\frac{1}{n+x}\right)^{2s}f\left(\frac{1}{n+x}\right),$$ then $v_s(x)=\left(\frac{1}{\lambda+x}\right)^{2s}$, where $$\lambda=\frac{n+\sqrt{n^2+4}}{2},$$ is a strictly positive eigenvector of $L_{s,\{n\}}$ with eigenvalue $\lambda^{-2s}$. In particular, $r(L_{s,\{n\}})=\lambda^{-2s}$.
\end{lemma}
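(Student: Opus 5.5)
The eigenvalue claim can be checked by a direct computation, using the fixed-point property of $\lambda$. Note that $\lambda=\frac{n+\sqrt{n^2+4}}{2}$ is the positive root of $\lambda^2-n\lambda-1=0$, so $\lambda=n+\frac{1}{\lambda}$. Equivalently, $\frac{1}{\lambda}$ is the fixed point in $[0,\frac1n]$ of the map $x\mapsto\frac{1}{n+x}$. The plan is to evaluate $L_{s,\{n\}}v_s$ explicitly and simplify the product of the two factors.

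For $x\in[0,\frac1n]$ we have
\[
(L_{s,\{n\}}v_s)(x)=\left(\frac{1}{n+x}\right)^{2s}\left(\frac{1}{\lambda+\frac{1}{n+x}}\right)^{2s}=\left(\frac{1}{\lambda(n+x)+1}\right)^{2s}.
\]
The key simplification uses $\lambda n+1=\lambda^2$. This gives $\lambda(n+x)+1=\lambda^2+\lambda x=\lambda(\lambda+x)$. Hence
\[
(L_{s,\{n\}}v_s)(x)=\lambda^{-2s}\left(\frac{1}{\lambda+x}\right)^{2s}=\lambda^{-2s}v_s(x).
\]
Clearly $v_s$ is continuous and strictly positive on $[0,\frac1n]$, since $\lambda>0$. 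So $v_s$ is a strictly positive eigenvector with eigenvalue $\lambda^{-2s}$.

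For the spectral radius, apply Lemma \ref{l2.7} with $u=v_s$ and $\alpha=\beta=\lambda^{-2s}$. The operator $L_{s,\{n\}}$ is positive and linear on $C([0,\frac1n])$, and it maps this space into itself because $\frac{1}{n+x}\in[0,\frac1n]$. Lemma \ref{l2.7} then gives $r(L_{s,\{n\}})=\lambda^{-2s}$.

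There is no real obstacle. The only step requiring care is spotting the identity $\lambda(n+x)+1=\lambda(\lambda+x)$, which follows from the quadratic relation defining $\lambda$.
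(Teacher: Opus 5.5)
Your proof is correct: the identity $\lambda(n+x)+1=\lambda(\lambda+x)$, which comes from $\lambda^2=n\lambda+1$, gives the eigenvalue relation directly, and Lemma~\ref{l2.7} with $u=v_s$ and $\alpha=\beta=\lambda^{-2s}$ then determines the spectral radius. The paper does not prove this lemma itself and only cites it from \cite{CHITANGA2026}, but it carries out exactly the same computation (using $\frac{1}{\lambda}=\lambda-n$) in the proofs of Theorems~\ref{th3.4}, \ref{th3.5} and \ref{th4.4}, so your route matches the paper's.
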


\section{The dimension spectrum of $F_{(a_1,a_2)}$ with $a_1\geq2$}
Our goal in this section is to show that the dimension spectrum of continued fraction expansions with coefficients restricted to the generalized Fibonacci sequence starting with an integer $\geq2$ is full. We use the concept of a strict break point to show that an $s\in(0,1)$ is in the dimension spectrum. This idea was previously used in \cite{CHITANGA2026} and is similar to the approach introduced by Kesseb\"ohmer and Zhu in \cite{MR2197868}. For completeness, we include the definition of a strict break point and restate Lemma 3.3 of  \cite{CHITANGA2026}, which will be used in this section.
\begin{definition}\label{def3.1}
Let $F=\{a_1,a_2,\ldots\}\subseteq\N$ with $a_1<a_2<\cdots$. Let $\Lambda$ be a finite subset of $F$, and $0<s<\dim_H(J_F)$. We say that $a_{k_0}> \max \Lambda$ is a strict break point for $(\Lambda,s)$ if 
$\dim_H(J_{\Lambda\cup \{a_{k_0}\}})\geq s$ and $\dim_H(J_{\Lambda\cup \{a_{k_0+1}\}})< s$.
\end{definition}
\begin{lemma}\textup{(\cite[Lemma 3.3]{CHITANGA2026})}\label{l3.2}
    Let $F$ be an infinite subset of $\N$, and $0<s<\dim_H(J_F)$. If for each finite subset $\Lambda$ of $F$ with a strict break point $a_{k_0}\in F$ for $(\Lambda,s)$, we have that $\dim_H(J_{\Lambda\cup T})>s$, where $T=\{a_n\in F:n>k_0\}$, then $s\in DS(F)$.
\end{lemma}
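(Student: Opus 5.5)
The plan is the greedy construction of Kesseb\"ohmer and Zhu. Write $F=\{a_1,a_2,\ldots\}$ with $a_1<a_2<\cdots$ and fix $0<s<\dim_H(J_F)$. We build a set $A\subseteq F$ inductively. Put $A_0=\varnothing$. For $n\geq1$, let $A_n=A_{n-1}\cup\{a_n\}$ if $\dim_H(J_{A_{n-1}\cup\{a_n\}})<s$; otherwise let $A_n=A_{n-1}$, and say that $a_n$ is \emph{rejected}. Set $A=\bigcup_n A_n$. By construction every finite $A_n$ satisfies $\dim_H(J_{A_n})<s$. If $A$ is finite this gives $\dim_H(J_A)<s$ directly. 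If $A$ is infinite, Theorem \ref{th2.3} gives $\dim_H(J_A)=\lim_n\dim_H(J_{A_n})\leq s$. It remains to exclude $\dim_H(J_A)<s$; this also handles the finite case.

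\textbf{Only finitely many rejections.} First, no $a_n$ being rejected is impossible: then $A=F$, and $\dim_H(J_F)>s$ contradicts the upper bound just obtained. Next suppose rejections occur but only finitely many, and let $a_{k_0}$ be the last one. Put $\Lambda=A_{k_0-1}$, which is finite with $\max\Lambda<a_{k_0}$. Since $a_{k_0}$ was rejected, $\dim_H(J_{\Lambda\cup\{a_{k_0}\}})\geq s$. Since $a_{k_0+1}$ was accepted and $A_{k_0}=\Lambda$, we have $\dim_H(J_{\Lambda\cup\{a_{k_0+1}\}})<s$. Hence $a_{k_0}$ is a strict break point for $(\Lambda,s)$ in the sense of Definition \ref{def3.1}. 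Every later element is accepted, so $A=\Lambda\cup T$ with $T=\{a_n:n>k_0\}$. The hypothesis then gives $\dim_H(J_A)>s$, again contradicting $\dim_H(J_A)\leq s$. So infinitely many $a_m$ are rejected.

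\textbf{Infinitely many rejections.} For each rejected $a_m$ we have $A_{m-1}\subseteq A$, hence
$$s\leq\dim_H(J_{A_{m-1}\cup\{a_m\}})\leq\dim_H(J_{A\cup\{a_m\}}),$$
where monotonicity of $\dim_H$ comes from inclusion of limit sets. So it suffices to show that $\dim_H(J_{A\cup\{a\}})\to\dim_H(J_A)$ as $a\to\infty$ with $a\in F$.

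\textbf{Main obstacle: this continuity step.} Lemma \ref{l2.5} does not apply, since its constant depends on a fixed finite $\Lambda$ and $A$ may be infinite. I would argue with the Perron--Frobenius operators instead. Suppose $\sigma=\dim_H(J_A)<s$ and pick $t\in(\sigma,s)$. By Theorems \ref{th1.4} and \ref{th1.5}, with strict monotonicity of the spectral radius, $r(L_{t,A})<1$. The perturbed operator is
$$L_{t,A\cup\{a\}}=L_{t,A}+P_a,\qquad (P_af)(x)=(a+x)^{-2t}f\big(1/(a+x)\big),$$
and $\|P_a\|\leq a^{-2t}\to0$. One technical point: $L_{t,A\cup\{a\}}$ lives on $C([0,1/\min(A\cup\{a\})])$, but for $a>\min A$ this is the same space as for $L_{t,A}$. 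Upper semicontinuity of the spectral radius in operator norm then gives $r(L_{t,A\cup\{a\}})<1$ for all large $a$. Theorems \ref{th1.4}(3) and \ref{th1.5}(2) turn this into $\dim_H(J_{A\cup\{a\}})\leq t<s$ for all large rejected $a=a_m$. This contradicts $s\leq\dim_H(J_{A\cup\{a_m\}})$ above. Hence $\dim_H(J_A)=s$, and therefore $s\in DS(F)$.
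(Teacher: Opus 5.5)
Your proof is correct. The paper gives no proof of this lemma: it quotes it from \cite{CHITANGA2026} and only remarks that the break-point idea goes back to Kesseb\"ohmer and Zhu. Your greedy construction is exactly that scheme, so there is no competing argument in the paper to compare against.

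The three-way case split is sound. In the finitely-many-rejections case, the last rejected digit is a strict break point for $(A_{k_0-1},s)$ in the sense of Definition~\ref{def3.1}, and $A=A_{k_0-1}\cup T$ then contradicts the hypothesis.

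You also correctly isolate the one delicate point: $A$ may be infinite, so Lemma~\ref{l2.5} cannot be used. Your replacement works and is elementary. Since $r(T)=\inf_n\|T^n\|^{1/n}$, once $\|L_{t,A}^n\|<1$ for some $n$, the bound $\|(L_{t,A}+P_a)^n\|<1$ persists when $\|P_a\|\leq a^{-2t}$ is small. Assuming $\dim_H(J_A)<s$ is also convenient, because it makes $L_{t,A}$ bounded, so the convergence exponent never causes trouble.

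Two trivial loose ends. First, if $A=\{a_1\}$, Theorem~\ref{th1.4}(3) does not literally apply, but Lemma~\ref{l2.8} gives $r(L_{t,A})=\lambda^{-2t}<1$ directly. Second, for infinite $A$, deducing $r(L_{t,A})<1$ from $t>\sigma_\infty$ needs only non-strict monotonicity. This holds because $(a+x)^{-2t}\leq(a+x)^{-2t'}$ for $t\geq t'$ when $a+x\geq 1$, so $L_{t,A}\leq L_{t',A}$ as positive operators.
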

We are now ready to prove Theorem \ref{th1.1}. First, we use Lemma \ref{l3.2} to determine a subset of $[0,\dim_{H}(J_{F_{(a_1,a_2)}})]$ that is always contained in the  dimension spectrum of the generalized Fibonacci sequence $F_{(a_1,a_2)}$.

\begin{theorem}\label{th3.3}
       If $F_{(a_1,a_2)}=\{a_1,a_2,\ldots\}\subset\N$ with $a_1<a_2$ and  $a_n=a_{n-1}+a_{n-2}$ for all $n\geq3$. Then $$\left[0,\min\left\{\frac{1}{2},\dim_{H}(J_{F_{(a_1,a_2)}})\right\}\right]\subset DS(F_{(a_1,a_2)}).$$ 
       \begin{proof}
           Clearly, $0\in DS(F_{(a_1,a_2)})$. Let $0<s<\frac{1}{2}$. Let $\Lambda$ be any finite subset of $F_{(a_1,a_2)}$ and $a_{k_0}\in F_{(a_1,a_2)}$ be a strict break point for $(\Lambda,s)$. So, $r(L_{s,\Lambda\cup\{a_{k_0}\}})\geq1$. If we show that $\dim_{H}(J_{\Lambda\cup T})>s$, where $T=\{a_{k}\in F_{(a_1,a_2)}:k>k_0\}$, then by Lemma \ref{l3.2}, $s\in DS(F_{(a_1,a_2)})$. Let $T_m=\Lambda\cup\{a_{k}\in F_{(a_1,a_2)}:k_0<k\leq k_0+m\}$. Let $v_s$ be the strictly positive eigenvector for $L_{s,\Lambda\cup\{a_{k_0}\}}$. Then,  for $x\in\left[0,\frac{1}{a_1}\right]$, we have 
           \begin{align*}
               L_{s,T_m}v_s(x)&=L_{s,\Lambda}v_s(x)+\suml_{j=1}^{m}\left(\frac{1}{a_{k_0+j}+x}\right)^{2s}v_s\left(\frac{1}{a_{k_0+j}+x}\right)\\
               &\geq L_{s,\Lambda}v_s(x)+\left(\frac{1}{a_{k_0}+x}\right)^{2s}v_s\left(\frac{1}{a_{k_0}+x}\right)\suml_{j=1}^{m}\left(\frac{a_{k_0}}{a_{k_0+j}}\right)^{2s},
           \end{align*}
           as $v_s$ is a decreasing function on $\left[0,\frac{1}{a_1}\right]$, and $\frac{a_{k_0}+x}{a_{k_0+j}+x}\geq \frac{a_{k_0}}{a_{k_0+j}}$. Set $$\alpha_m(s):=\suml_{j=1}^{m}\left(\frac{a_{k_0}}{a_{k_0+j}}\right)^{2s}.$$
           If we show that there exists an integer $m$ sufficiently large such that $\alpha_m(s)>1$, then by Lemma \ref{l2.6} , there exists $\beta>1$ such that 
           \begin{align*}
               L_{s,T_m}v_s(x)&\geq \beta\left(L_{s,\Lambda}v_s(x)+\left(\frac{1}{a_{k_0}+x}\right)^{2s}v_s\left(\frac{1}{a_{k_0}+x}\right)\right)\\
               &=\beta L_{s,\Lambda\cup\{a_{k_0}\}}v_s(x)\\
               &=\beta r(L_{s,\Lambda\cup\{a_{k_0}\}})v_s(x)\\
               &\geq \beta v_s(x).
           \end{align*}
         Then, by Lemma \ref{l2.7}, we get $r(L_{s,T_m})\geq\beta>1$, and therefore, by Theorem \ref{th1.4}, $\dim_{H}(J_{T_m})>s$. As $T_m\subset\Lambda\cup T$, we have that $\dim_{H}(J_{\Lambda\cup T})>s$.\par
           It remains to show that $\alpha_m(s)>1$ for some large value of $m$. Note that $k_0\geq2$; therefore, by Lemma \ref{l2.1}, we get
           \begin{align*}
             \suml_{j=1}^{\infty}\left(\frac{a_{k_0}}{a_{k_0+j}}\right)^{2s}&\geq \suml_{j=1}^{\infty}\left(\frac{1}{2^j}\right)^{2s}\\
             &=\frac{1}{2^{2s}-1}.
           \end{align*}
           Since $0<s<\frac{1}{2}$, $\frac{1}{2^{2s}-1}>1$. Thus, there exists $m\in \N$ such that $\alpha_m(s)>1$. Note that the Dimension spectrum is closed by \cite[Theorem 1.2]{MR3960790}. This completes the proof.
       \end{proof}
    \end{theorem}

As a consequence of Theorem \ref{th3.3}, we show that $F_{(a_1,a_2)}$ has full dimension spectrum in the majority of cases. More precisely, if $a_1\geq3$, or if $a_1=2$ and $a_2\geq6$, then $F_{(a_1,a_2)}$ has full dimension spectrum.
    \begin{theorem}\label{th3.4}
       If $F_{(a_1,a_2)}=\{a_1,a_2,\ldots\}\subset\N$ with $3\leq a_1<a_2$ and  $a_n=a_{n-1}+a_{n-2}$ for all $n\geq3$. Then $$\left[0,\dim_{H}(J_{F_{(a_1,a_2)}})\right]=DS(F_{(a_1,a_2)}).$$ 
       \begin{proof}
        In view of Theorem \ref{th3.3}, it suffices to show that $\dim_{H}(J_{F_{(a_1,a_2)}})<\frac{1}{2}$. Let $G=\{3,4,7,11,\ldots\}$. Since there exists a non-decreasing bijection from $G$ to $F_{(a_1,a_2)}$ for every $(a_1,a_2)$ such that $a_1\geq3$, we have
        $\dim_{H}(J_{F_{(a_1,a_2)}})\leq\dim_{H}(J_G)$ by Proposition \ref{p2.4}. So, it is enough to show that $\dim_{H}(J_G)<\frac{1}{2}$. Let $G^m=\{b_1,b_2,\ldots,b_m\}$, where $b_i$ denotes the $i^{th}$ element of $G$. Let $v_s$ be the positive eigenvector of $L_{s,\{3\}}$ with eigenvalue $\lambda^{-2s}$. Thus, by Lemma \ref{l2.8}, $v_s(x)=\left(\frac{1}{\lambda+x}\right)^{2s}$, where $\lambda=\frac{3+\sqrt{13}}{2}$. Note that $\lambda$ satisfies $\lambda^2-3\lambda-1=0$. For $x\in\left[0,\frac{1}{3}\right]$, we have
        \begin{align*}
           L_{s,G^m}v_s(x)&=\lambda^{-2s}v_{s}(x)+\suml_{k=2}^{m}\left(\frac{1}{b_k+x}\right)^{2s}v_s\left(\frac{1}{b_k+x}\right)\\
           &=\lambda^{-2s}v_{s}(x)+\suml_{k=2}^{m}\left(\frac{1}{\lambda b_k+\lambda x+1}\right)^{2s}\\
            &=\lambda^{-2s}\left(1+\suml_{k=2}^{m}\left(\frac{\lambda+x}{ b_k+ x+\lambda-3}\right)^{2s}\right)v_{s}(x),
        \end{align*}
        as $\frac{1}{\lambda}=\lambda-3$. Observe that $\frac{\lambda+x}{ b_k+ x+\lambda-3}$ is increasing on $\left[0,\frac{1}{3}\right]$. Therefore, we obtain
        \begin{align*}
            \lambda^{-2s}\left(1+\suml_{k=2}^{m}\left(\frac{\lambda+x}{ b_k+ x+\lambda-3}\right)^{2s}\right)&\leq\lambda^{-2s}\left(1+\suml_{k=2}^{m}\left(\frac{\lambda+\frac{1}{3}}{ b_k+\lambda-\frac{8}{3}}\right)^{2s}\right)\\
            &=\lambda^{-2s}+\left(\frac{3\lambda+1}{3\lambda}\right)^{2s}\suml_{k=2}^{m}\left(\frac{1}{ b_k+\lambda-\frac{8}{3}}\right)^{2s}\\
            &=:\mu_{m}(s).
        \end{align*}
        Using the properties of the Fibonacci sequence, it is easy to show that $b_k>k^3$ for all $k\geq17$. Also note that $\lambda^2=3\lambda+1$. Thus, we get
        \begin{align*}
            \mu_{m}(s)&\leq \lambda^{-2s}+\left(\frac{\lambda}{3}\right)^{2s}\suml_{k=2}^{16}\left(\frac{1}{ b_k+\lambda-\frac{8}{3}}\right)^{2s}+\left(\frac{\lambda}{3}\right)^{2s}\intl_{16}^{\infty}\frac{1}{x^{6s}}dx\\
            &=\lambda^{-2s}+\left(\frac{\lambda}{3}\right)^{2s}\suml_{k=2}^{16}\left(\frac{1}{ b_k+\lambda-\frac{8}{3}}\right)^{2s}+\left(\frac{\lambda}{3}\right)^{2s}\frac{16^{-6s+1}}{6s-1}.
        \end{align*}
        By direct computation, we obtain  $\mu_m(s)<1$ for $s=0.49$. Thus, by Lemma \ref{l2.7}, $r(L_{s,G^m})\leq \mu_m(s)<1$. Theorem \ref{th1.4} then implies that $\dim_H(J_{G^m})<0.49$ for all $m$. Thus, by Theorem \ref{th2.3}, it follows that  $\dim_H(J_{G})\leq s$ for $s=0.49$.
       \end{proof}
    \end{theorem}

    \begin{theorem}\label{th3.5}
       If $F_{(a_1,a_2)}=\{a_1,a_2,\ldots\}\subset\N$ with $ a_1=2,\ a_2\geq6$, and  $a_n=a_{n-1}+a_{n-2}$ for all $n\geq3$. Then $$\left[0,\dim_{H}(J_{F_{(a_1,a_2)}})\right]=DS(F_{(a_1,a_2)}).$$ 
       \begin{proof}
       We will show that $\dim_{H}(J_{F_{(a_1,a_2)}})<\frac{1}{2}$. The result then follows from Theorem \ref{th3.3}. Let $\HH=\{2,6,8,14,\ldots\}$. Since there exists a non-decreasing bijection from $\HH$ to $F_{(a_1,a_2)}$ for every $(a_1,a_2)$ such that  $a_1=2$ and $a_2\geq6$, we have
        $\dim_{H}(J_{F_{(a_1,a_2)}})\leq\dim_{H}(J_{\HH})$ by Proposition \ref{p2.4}. So, it is enough to show that $\dim_{H}(J_{\HH})<\frac{1}{2}$. Let $\HH^m=\{b_1,b_2,\ldots,b_m\}$, where $b_i$ denotes the $i^{th}$ element of $\HH$. Let $v_s$ be the positive eigenvector of $L_{s,\{2\}}$ with eigenvalue $\lambda^{-2s}$. Thus, by Lemma \ref{l2.8}, $v_s(x)=\left(\frac{1}{\lambda+x}\right)^{2s}$, where $\lambda=1+\sqrt{2}$. Observe that $\lambda$ satisfies $\lambda^2-2\lambda-1=0$. For $x\in[0,\frac{1}{2}]$, we have
        \begin{align*}
           L_{s,\HH^m}v_s(x)&=\lambda^{-2s}v_{s}(x)+\suml_{k=2}^{m}\left(\frac{1}{b_k+x}\right)^{2s}v_s\left(\frac{1}{b_k+x}\right)\\
           &=\lambda^{-2s}v_{s}(x)+\suml_{k=2}^{m}\left(\frac{1}{\lambda b_k+\lambda x+1}\right)^{2s}\\
            &=\lambda^{-2s}\left(1+\suml_{k=2}^{m}\left(\frac{\lambda+x}{ b_k+ x+\lambda-2}\right)^{2s}\right)v_{s}(x),
        \end{align*}
        as $\frac{1}{\lambda}=\lambda-2$. Since $\frac{\lambda+x}{ b_k+ x+\lambda-2}$ is increasing on $\left[0,\frac{1}{2}\right]$, we get
        \begin{align*}
            \lambda^{-2s}\left(1+\suml_{k=2}^{m}\left(\frac{\lambda+x}{ b_k+ x+\lambda-2}\right)^{2s}\right)&\leq\lambda^{-2s}\left(1+\suml_{k=2}^{m}\left(\frac{\lambda+\frac{1}{2}}{ b_k+\lambda-\frac{3}{2}}\right)^{2s}\right)\\
            &=\lambda^{-2s}+\left(\frac{2\lambda+1}{2\lambda}\right)^{2s}\suml_{k=2}^{m}\left(\frac{1}{ b_k+\lambda-\frac{3}{2}}\right)^{2s}\\
            &=:\mu_{m}(s).
        \end{align*}
       By simple induction, we get $b_k>k^3$ for all $k\geq16$. Also, $\lambda^2=2\lambda+1$. Thus, we get
        \begin{align*}
            \mu_{m}(s)&\leq \lambda^{-2s}+\left(\frac{\lambda}{2}\right)^{2s}\suml_{k=2}^{15}\left(\frac{1}{ b_k+\lambda-\frac{3}{2}}\right)^{2s}+\left(\frac{\lambda}{2}\right)^{2s}\intl_{15}^{\infty}\frac{1}{x^{6s}}dx\\
            &=\lambda^{-2s}+\left(\frac{\lambda}{2}\right)^{2s}\suml_{k=2}^{15}\left(\frac{1}{ b_k+\lambda-\frac{3}{2}}\right)^{2s}+\left(\frac{\lambda}{2}\right)^{2s}\frac{15^{-6s+1}}{6s-1}.
        \end{align*}
       Again, a simple calculation gives $\mu_m(s)<1$ for $s=0.485$. Thus, applying Lemma \ref{l2.7} followed by   Theorem \ref{th1.4}, we obtain $\dim_H(J_{\HH^m})<0.485$ for all $m$. Thus, by Theorem \ref{th2.3}, it follows that  $\dim_H(J_{\HH})\leq s$ for $s=0.485$.
       \end{proof}
    \end{theorem}

    We are now left with only three cases, namely $a_1=2$ and $a_2\in\{3,4,5\}$, in order to complete the proof of Theorem \ref{th1.1}, which we will cover in our next theorem.

\begin{theorem}\label{th3.6}
  If $F_{(a_1,a_2)}=\{a_1,a_2,\ldots\}\subset\N$ with $ a_1=2,\ a_2\in\{3,4,5\}$, and  $a_n=a_{n-1}+a_{n-2}$ for all $n\geq3$. Then $$\left[0,\dim_{H}(J_{F_{(a_1,a_2)}})\right]=DS(F_{(a_1,a_2)}).$$  
  \begin{proof}
      First, we find an upper bound for $\dim_{H}(J_{F_{(a_1,a_2)}})$ in the case where $a_1=2$ and $a_2\in\{3,4,5\}$. Let $\G=\{2,3,5,8,\ldots\}$. Since there exists a non-decreasing bijection from $\G$ to $F_{(a_1,a_2)}$ for every $(a_1,a_2)$ such that  $a_1=2$ and $a_2\in\{3,4,5\}$, we have
        $\dim_{H}(J_{F_{(a_1,a_2)}})\leq\dim_{H}(J_{\G})$ by Proposition \ref{p2.4}. Let $\G^m=\{b_1,b_2,\ldots,b_m\}$, where $b_i$ denotes the $i^{th}$ element of $\G$. Let $v_s$ be the positive eigenvector of $L_{s,\{2\}}$ with eigenvalue $\lambda^{-2s}$. Thus, by Lemma \ref{l2.8}, $v_s(x)=\left(\frac{1}{\lambda+x}\right)^{2s}$, where $\lambda$ satisfies $\lambda^2-2\lambda-1=0$, i.e. $\lambda=1+\sqrt{2}$. Proceeding as in the proof of the previous theorem, we obtain
        \begin{align*}         L_{s,\G^m}v_s(x)&=\lambda^{-2s}\left(1+\suml_{k=2}^{m}\left(\frac{\lambda+x}{ b_k+ x+\lambda-2}\right)^{2s}\right)v_{s}(x)\\  
        &\leq \left(\lambda^{-2s}+\left(\frac{2\lambda+1}{2\lambda}\right)^{2s}\suml_{k=2}^{m}\left(\frac{1}{ b_k+\lambda-\frac{3}{2}}\right)^{2s}\right)v_s(x).\\
        \end{align*}
        Let $\mu_m(s):=\lambda^{-2s}+\left(\frac{2\lambda+1}{2\lambda}\right)^{2s}\suml_{k=2}^{m}\left(\frac{1}{ b_k+\lambda-\frac{3}{2}}\right)^{2s}.$ Observe that $b_k>k^2$ for all $k\geq9$, and using the fact that $\lambda^2=2\lambda+1$, we get
       \begin{align*}
            \mu_{m}(s)&\leq \lambda^{-2s}+\left(\frac{\lambda}{2}\right)^{2s}\suml_{k=2}^{8}\left(\frac{1}{ b_k+\lambda-\frac{3}{2}}\right)^{2s}+\left(\frac{\lambda}{2}\right)^{2s}\intl_{8}^{\infty}\frac{1}{x^{4s}}dx\\
            &=\lambda^{-2s}+\left(\frac{\lambda}{2}\right)^{2s}\suml_{k=2}^{8}\left(\frac{1}{ b_k+\lambda-\frac{3}{2}}\right)^{2s}+\left(\frac{\lambda}{2}\right)^{2s}\frac{8^{-4s+1}}{4s-1}.
        \end{align*}
        Using a calculator we find that $\mu_m(s)<1$ for $s=0.6$. Thus, applying Lemma \ref{l2.7}, and then   Theorem \ref{th1.4}, we obtain that  $\dim_H(J_{\G^m})<0.6$ for all $m$. Thus, by Theorem \ref{th2.3}, it follows that  $\dim_H(J_{\G})\leq s$ for $s=0.6$. Hence, for $a_1=2$ and $a_2\in\{3,4,5\}$, we have 
        $$\dim_{H}(J_{F_{(a_1,a_2)}})\leq 0.6.$$
Now, we will show that $\left[0,\dim_{H}(J_{F_{(a_1,a_2)}})\right]=DS(F_{(a_1,a_2)})$. Clearly $0$ and $\dim_{H}(J_{F_{(a_1,a_2)}})$ are in the dimension spectrum of $F_{(a_1,a_2)}$. Let  $0<s<\dim_{H}(J_{F_{(a_1,a_2)}})$. Then $s\leq0.6$. Let $\Lambda$ be any finite subset of $F_{(a_1,a_2)}$, and $a_{k_0}\in F_{(a_1,a_2)}$ be a strict break point for $(\Lambda,s)$. Following the reasoning in the proof of Theorem \ref{th3.3}, we see that in order to prove $s\in DS(F_{(a_1,a_2)})$, it is enough to show that $\alpha_{m,a_2}(s)>1$ for some sufficiently large $m$ in each  case where $a_1=2$ and $a_2\in\{3,4,5\}$. Here, $$\alpha_{m,a_2}(s):=\suml_{j=1}^{m}\left(\frac{a_{k_0}}{a_{k_0+j}}\right)^{2s}.$$
Clearly, $k_0\geq2$. Therefore, by Lemma \ref{l2.2}, we have
 $$\frac{a_{k_0}}{a_{k_0+j}}\geq \min\left\{\frac{a_2}{a_{2+j}},\frac{a_3}{a_{3+j}}\right\}.$$
 If $a_1=2$ and $a_2=3$, then $D=a_2^2-a_1a_2-a_1^2<0$. Also, $\alpha_{m,a_2}(s)$ is a decreasing function of $s$, and $s\leq0.6$. Thus, we get 
 \begin{align*}
     \suml_{j=1}^{m}\left(\frac{a_{k_0}}{a_{k_0+j}}\right)^{2s}&\geq \suml_{j=1}^{m}\left(\frac{a_2}{a_{2+j}}\right)^{2(0.6)}\\
     &=3^{1.2}\suml_{j=3}^{m+2}\left(\frac{1}{a_{j}}\right)^{1.2}.
 \end{align*}
 Calculating first few terms, we get $\alpha_{m,a_2}(s)>1$ for $m=3$. In the remaining two cases, where $a_1=2$ and $a_2\in\{4,5\}$, we see that $D=a_2^2-a_1a_2-a_1^2>0$. Thus, we have 
 \begin{align*}
     \suml_{j=1}^{m}\left(\frac{a_{k_0}}{a_{k_0+j}}\right)^{2s}&\geq \suml_{j=1}^{m}\left(\frac{a_3}{a_{3+j}}\right)^{2(0.6)}\\
     &=a_3^{1.2}\suml_{j=4}^{m+3}\left(\frac{1}{a_{j}}\right)^{1.2}.
 \end{align*}
       Using a calculator, we find that for $a_2=4$, the inequality  $\alpha_{m,a_2}(s)>1$ holds for $m=3$, and for $a_2=5$, $\alpha_{m,a_2}(s)>1$ for $m=4$. This completes the proof.
  \end{proof}
\end{theorem}
\section{The dimension spectrum of $F_{(a_1,a_2)}$ with $a_1=1$}

In this section, we consider the set of continued fraction expansions with coefficients restricted to the generalized Fibonacci sequence $F_{(a_1,a_2)}$ with $a_1=1$. We shall see that the structure of the dimension spectrum becomes more intricate as $a_2$  decreases. In certain cases, we require explicit lower and upper bounds for $\dim_{H}(J_{A})$ for specific subsets $A\subset\N$. For finite sets $A$, we obtained these bounds using rigorous numerical methods developed by Falk and Nussbaum in \cite{MR3827801,MR3851775} and the MATLAB code from
\begin{gather}
\text{\url{https://sites.math.rutgers.edu/~falk/hausdorff/codes.html}} \label{code}
\end{gather}

For infinite sets, we follow the strategy outlined in \cite{MR4068257}. Below, we provide the bounds for certain infinite subsets of $\N$ that are sufficient for our analysis.\par
We start by recalling the following result from \cite{MR3960790,MR1901102}, which is essential for estimating the Hausdorff dimension of the set of continued fractions with infinite digit sets. Before stating it, we recall that the set of continued fraction expansions can be realized as the limit set of an infinite CIFS $$S_{\Lambda}:=\{\phi_i:[0,1]\to[0,1];x\mapsto1/(i+x):i\in\Lambda\subseteq\N\}.$$ 
 We denote by $K_{\Lambda}$ the distortion constant of $S_{\Lambda}$, and     by $\chi(S_{\Lambda})$ its characteristic Lyapunov exponent. For any subset $F\subseteq\Lambda$, let $h_F:=\dim_{H}(J_{F})$. 
 \begin{theorem}\textup{(\cite{MR3960790,MR1901102})}\label{th4.1}
    Let $S_{\Lambda}=\{\phi_i\}_{i\in\Lambda}$ be a strongly regular CIFS. If $F\subset\Lambda$ is a finite set such that $h_F$ is sufficiently large, then $$\dim_{H}(J_{\Lambda})-\dim_{H}(J_{F})\leq\frac{K_{\Lambda}^{h_F}}{\chi(S_{\Lambda})}\suml_{\Lambda\setminus F}\|\phi_i'\|_{\infty}^{h_F}.$$
 \end{theorem}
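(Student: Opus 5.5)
The plan is to run the standard perturbation argument for the topological pressure of a CIFS. I would take the pressure function
$$P_{\Lambda}(t)=\lim_{n\to\infty}\frac1n\log\suml_{|\omega|=n}\|\phi_\omega'\|_\infty^{t},$$
defined and finite for $t>\theta_\Lambda$, the finiteness parameter. The facts I will use from the CIFS theory cited before the statement are these. First, Bowen's formula: for a strongly regular system, $h_\Lambda$ is the unique zero of $P_\Lambda$, and $P_F(h_F)=0$ for finite $F$. Second, $e^{P_\Lambda(t)}=r(L_{t,\Lambda})$, where $L_{t,\Lambda}$ is the transfer operator; this is the same identification as in Theorems \ref{th1.4} and \ref{th1.5}. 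The hypothesis that $h_F$ is sufficiently large will be used only to guarantee $h_F>\theta_\Lambda$, so that $\suml_{\Lambda\setminus F}\|\phi_i'\|_\infty^{h_F}<\infty$ and $P_\Lambda$ is finite, real-analytic and strictly decreasing on $[h_F,h_\Lambda]$.

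\textbf{Step 1 (pressure gap to dimension gap).} On $(\theta_\Lambda,\infty)$ the function $P_\Lambda$ is convex, and $P_\Lambda'(t)=-\chi_{\mu_t}$, where $\mu_t$ is the equilibrium state. Every invariant measure has Lyapunov exponent at least $\chi(S_\Lambda)$, so $-P_\Lambda'\geq\chi(S_\Lambda)$ on $[h_F,h_\Lambda]$. Integrating from $h_F$ to $h_\Lambda$ and using $P_\Lambda(h_\Lambda)=0$ gives
$$h_\Lambda-h_F\leq\frac{P_\Lambda(h_F)}{\chi(S_\Lambda)}.$$
This step needs $h_F\leq h_\Lambda$, which holds because $F\subset\Lambda$.

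\textbf{Step 2 (bounding $P_\Lambda(h_F)$).} Put $t=h_F$ and let $v$ be the strictly positive eigenfunction of $L_{t,F}$ with eigenvalue $e^{P_F(t)}=1$, as in Theorem \ref{th1.4}. By bounded distortion, $v(x)\leq K_\Lambda^{t}\,v(y)$ for all $x,y$; this is the analogue of part (2) of Theorem \ref{th1.4}, with the constant $e^{2s/a_1}$ replaced by $K_\Lambda^{t}$. Splitting $L_{t,\Lambda}=L_{t,F}+L_{t,\Lambda\setminus F}$ then gives
$$L_{t,\Lambda}v(x)\leq v(x)+\suml_{\Lambda\setminus F}\|\phi_i'\|_\infty^{t}\sup v\leq\Bigl(1+K_\Lambda^{t}\suml_{\Lambda\setminus F}\|\phi_i'\|_\infty^{t}\Bigr)v(x).$$
By Lemma \ref{l2.7}, applied on a finite truncation of $\Lambda$ and then passing to the limit as in Theorem \ref{th2.3}, we get $e^{P_\Lambda(t)}\leq 1+K_\Lambda^{t}\sum_{\Lambda\setminus F}\|\phi_i'\|_\infty^{t}$. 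Since $\log(1+x)\leq x$, combining this with Step 1 yields the claimed inequality.

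\textbf{Main obstacle.} The delicate step is Step 1, specifically the lower bound $-P_\Lambda'\geq\chi(S_\Lambda)$ uniformly on $[h_F,h_\Lambda]$ for an infinite alphabet. This requires the existence of Gibbs/equilibrium states and differentiability of the pressure for $t>\theta_\Lambda$, which is exactly where strong regularity and the largeness of $h_F$ enter. I would first try to avoid the derivative altogether and use the elementary inequality
$$\|\phi_\omega'\|_\infty^{t+\delta}\leq\|\phi_\omega'\|_\infty^{t}\,e^{-\delta|\omega|\chi},$$
valid when $\chi$ is taken as the uniform contraction rate $-\lim_n\frac1n\log\sup_{|\omega|=n}\|\phi_\omega'\|_\infty$. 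This gives $P_\Lambda(t+\delta)\leq P_\Lambda(t)-\delta\chi$ directly, without any differentiability. If the paper's $\chi(S_\Lambda)$ is meant as an infimum of Lyapunov exponents of measures instead, I would fall back on the variational principle to obtain the same slope bound.
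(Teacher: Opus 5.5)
The paper gives no proof of this statement; it is imported from \cite{MR3960790,MR1901102}. Your two-step scheme is the standard argument behind that result. You bound $P_\Lambda(h_F)\le\ln\bigl(1+K_\Lambda^{h_F}\sum_{\Lambda\setminus F}\|\phi_i'\|_\infty^{h_F}\bigr)$ using the $h_F$-eigenfunction of the finite subsystem and bounded distortion, and then convert the pressure gap into a dimension gap through the slope of $P_\Lambda$. Step 2 is sound, with one small adjustment. Theorem \ref{th2.3} concerns dimensions, not pressure, so ``passing to the limit as in Theorem \ref{th2.3}'' is not quite the right citation. Either cite $P_\Lambda(t)=\sup\{P_G(t):G\subset\Lambda\text{ finite}\}$, or apply Lemma \ref{l2.7} directly to $L_{t,\Lambda}$ on $C(X)$ (bounded for $t>\theta_\Lambda$). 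In the second case you also use $r(L_{t,\Lambda})=\lim_n\|L_{t,\Lambda}^n\mathbf{1}\|_\infty^{1/n}=e^{P_\Lambda(t)}$, which follows from bounded distortion.

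What needs repair is the justification of the slope bound in Step 1. Suppose $\chi(S_\Lambda)$ is, as the notation and the pointer to Mauldin--Urba\'nski suggest, the Lyapunov exponent $\chi_{\mu_{h_\Lambda}}$ of the invariant measure equivalent to the $h_\Lambda$-conformal measure of the whole system. Then your claim that every invariant measure has exponent at least $\chi(S_\Lambda)$ is false. For $\Lambda=\N$, the Dirac measure at the fixed point $[1,1,1,\ldots]$ of $\phi_1$ has exponent $2\ln\frac{1+\sqrt5}{2}\approx0.96$, whereas the Gauss measure has exponent $\pi^2/(6\ln2)\approx2.37$.

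The inequality you need follows instead from the convexity you listed but never used. Since $P_\Lambda'$ is nondecreasing, $-P_\Lambda'(t)=\chi_{\mu_t}\ge\chi_{\mu_{h_\Lambda}}$ for $t\le h_\Lambda$. Equivalently, the tangent line at $h_\Lambda$ gives $P_\Lambda(h_F)\ge\chi_{\mu_{h_\Lambda}}(h_\Lambda-h_F)$. The tangent at $h_F$ would only bound $h_\Lambda-h_F$ from below.

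Your derivative-free fallback with the uniform contraction rate $\chi_{\mathrm{unif}}$ is valid, but it yields the weaker inequality with $\chi_{\mathrm{unif}}\le\chi_{\mu_{h_\Lambda}}$ in the denominator. This still covers the paper's application. With all digits $\ge n$ one has $\sup_{|\omega|=k}\|\phi_\omega'\|_\infty=O(\lambda_n^{-2k})$, where $\lambda_n=\frac{n+\sqrt{n^2+4}}{2}$, so the bound of Proposition \ref{p4.2} holds for $\chi_{\mathrm{unif}}$ as well.

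Finally, the pointwise inequality $\|\phi_\omega'\|_\infty^{t+\delta}\le\|\phi_\omega'\|_\infty^{t}e^{-\delta|\omega|\chi}$ need not hold for every word. By Fekete's lemma $\sup_{|\omega|=n}\|\phi_\omega'\|_\infty\ge e^{-n\chi}$, and the inequality fails for $\omega=1^n$ in the continued fraction system. It holds up to a factor $e^{o(n)}$, which disappears after taking $\frac1n\log$.
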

For the definitions of $K_{\Lambda}$ and $\chi(S_{\Lambda})$, we refer the reader to \cite{MR1387085,MR2003772}. As we are concerned here only with their bounds,  the following proposition from \cite{MR4068257} provides a lower bound for the Lyapunov exponent of $S_{\Lambda}$.
\begin{proposition}\textup{(\cite[Proposition 2.6]{MR4068257})}\label{p4.2}
    If $\Lambda\subset\{n,n+1,n+2,\ldots\}$ for some $n\in\N$, then 
    $$\chi(S_{\Lambda})\geq2\ln\left(\frac{n+\sqrt{n^2+4}}{2}\right).$$
\end{proposition}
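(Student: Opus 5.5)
The plan is to express the Lyapunov exponent as a limit of averaged logarithms of derivatives of $k$-fold compositions, and then bound those derivatives uniformly using the growth of continued fraction denominators when every digit is at least $n$.

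Recall that $\chi(S_{\Lambda})=\int -\log|\phi'_{\omega_1}(\pi(\sigma\omega))|\,d\mu(\omega)$, where $\mu$ is the shift-invariant equilibrium (Gibbs) measure on $\Lambda^{\N}$ associated with $S_\Lambda$ at $h_\Lambda$. Here $\sigma$ is the shift and $\pi$ the coding map. Since $\mu$ is $\sigma$-invariant and $\phi_{\omega|_k}=\phi_{\omega_1}\circ\cdots\circ\phi_{\omega_k}$, the chain rule telescopes the $k$-fold Birkhoff sum. This gives, for every $k$,
\[
\chi(S_\Lambda)=\frac1k\int -\log\bigl|\phi'_{\omega|_k}(\pi(\sigma^k\omega))\bigr|\,d\mu(\omega).
\]
It therefore suffices to find a uniform bound $|\phi'_{\omega|_k}(y)|\le C\lambda^{-2k}$ for all $\omega\in\Lambda^{\N}$ and $y\in[0,1]$, with $\lambda=\frac{n+\sqrt{n^2+4}}{2}$ and $C$ independent of $k$. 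Dividing by $k$ and letting $k\to\infty$ then yields $\chi(S_\Lambda)\ge 2\ln\lambda$.

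For the uniform bound I will use the classical continued fraction identity. If $p_j/q_j$ are the convergents of $[\omega_1,\dots,\omega_k]$, with $q_{-1}=0$, $q_0=1$ and $q_j=\omega_jq_{j-1}+q_{j-2}$, then
\[
\phi_{\omega|_k}(y)=\frac{p_k+yp_{k-1}}{q_k+yq_{k-1}},\qquad |\phi'_{\omega|_k}(y)|=\frac{1}{(q_k+yq_{k-1})^2}\le q_k^{-2}.
\]
This follows by induction on $k$ together with $|p_kq_{k-1}-p_{k-1}q_k|=1$.

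Since every $\omega_j\ge n$, we have $q_j\ge nq_{j-1}+q_{j-2}$. By induction $q_j\ge u_j$, where $u_j=nu_{j-1}+u_{j-2}$ with $u_{-1}=0$ and $u_0=1$. Solving this linear recurrence, whose characteristic roots are $\lambda$ and $-1/\lambda$, gives $u_k=\frac{\lambda^{k+1}-(-\lambda)^{-(k+1)}}{\lambda+\lambda^{-1}}\ge c\lambda^{k}$ for a constant $c>0$ and all $k\ge0$. Hence $|\phi'_{\omega|_k}|\le c^{-2}\lambda^{-2k}$, which is the required bound with $C=c^{-2}$.

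I expect the main obstacle to be justifying the first step in the infinite-alphabet setting. One has to check that the $k$-step telescoped expression for $\chi$ is valid, that is, that $\chi$ is finite and the integrals are well defined for the equilibrium measure of a strongly regular system. Pointwise this causes no difficulty, because the lower bound on $-\log|\phi'_{\omega|_k}|$ is uniform and the integrand is bounded below. So even if $\chi$ were infinite the inequality would hold trivially, and the only care needed is the invariance argument used to replace $\chi$ by the averaged $k$-fold quantity.
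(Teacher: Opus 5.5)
Your proof is correct and complete. The paper gives no proof of this statement; it imports it from \cite[Proposition 2.6]{MR4068257}, so there is no in-paper argument to compare against. Your three steps all check out:
\begin{itemize}
\item telescoping the Birkhoff sums of $\zeta(\omega)=-\log|\phi'_{\omega_1}(\pi(\sigma\omega))|$ using only shift-invariance;
\item the identity $|\phi'_{\omega|_k}(y)|=(q_k+yq_{k-1})^{-2}\le q_k^{-2}$;
\item the comparison $q_k\ge u_k\ge \frac{n}{\sqrt{n^2+4}}\,\lambda^k$, which uses $\lambda-\lambda^{-1}=n$ and $\lambda+\lambda^{-1}=\sqrt{n^2+4}$.
\end{itemize}
Your handling of the point you flagged is also right. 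Since $\zeta\ge 2\log n\ge 0$, every integral lies in $[0,\infty]$, and $\int\zeta\circ\sigma^j\,d\mu=\int\zeta\,d\mu$ holds there without assuming finiteness. Because you never use the Gibbs property, the bound holds for whichever invariant measure defines $\chi(S_\Lambda)$ in Theorem~\ref{th4.1}.

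If you want to avoid the limit $k\to\infty$, the eigenfunction of Lemma~\ref{l2.8} gives a one-step version. Put $w(x)=2\log(\lambda+x)$ and use $\lambda n+1=\lambda^2$. Then for every $a\ge n$ and $x\in[0,1]$,
\[
-\log|\phi_a'(x)|+w(\phi_a(x))=2\log\bigl(\lambda(a+x)+1\bigr)\ \ge\ 2\log\bigl(\lambda(\lambda+x)\bigr)=2\log\lambda+w(x).
\]
Taking $a=\omega_1$ and $x=\pi(\sigma\omega)$ gives $\zeta(\omega)\ge 2\log\lambda+w(\pi(\sigma\omega))-w(\pi(\omega))$. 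Integrating against any shift-invariant probability measure removes this bounded coboundary, so $\chi\ge 2\log\lambda$ directly.

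This is just the logarithmic form of the fact that $(\lambda+x)^{-2s}$ is an eigenfunction of $L_{s,\{n\}}$ with eigenvalue $\lambda^{-2s}$. It trades your denominator recursion for one algebraic identity. Like your argument, it is sharp for $\Lambda=\{n\}$: there equality holds, just as $q_k=u_k$ in your version.
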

 We also note that if $1\in\Lambda$, then $K_{\Lambda}=4$; see \cite{MR1487636}.

\begin{lemma}\label{l4.3}
   The following upper bounds hold:
   $$\dim_H(J_{F_{(1,3)}})\leq 0.71,\ \dim_{H}(J_{F_{(1,2)}\setminus\{8\}})\leq0.7976.$$
   \begin{proof}
      For some large $M$, define $F_{(1,3)}^M=\{1,3,a_3,\ldots,a_M\}$, where $a_i$ denotes the $i^{th}$ element of  $F_{(1,3)}$. Suppose $l_M$ and $u_M$ are such that $$l_M\leq h_{F_{(1,3)}^M}\leq u_M,$$ where $h_{F_{(1,3)}^M}=\dim_H(J_{{F_{(1,3)}^M}})$. It is easy to see that $a_k>k^3$ for all $k\geq19$. Hence, for sufficiently large $M$, we have $$\suml_{F_{(1,3)}\setminus F_{(1,3)}^M}\|\phi_i'\|_{\infty}^{h_{F_{(1,3)}^M}}\leq \suml_{j\geq M+1}\left(\frac{1}{a_j}\right)^{2l_M}\leq \intl_{M}^{\infty}x^{-6l_M}dx=\frac{M^{-6l_M+1}}{6l_M-1}.$$
      From Proposition \ref{p4.2}, we have $\chi(S_{F_{(1,3)}})\geq 2\ln\left(\frac{1+\sqrt{5}}{2}\right)$, and $K_{F_{(1,3)}}=4$ since $1\in F_{(1,3)}$. Hence, by Theorem \ref{th4.1}, we get
      \begin{align}\label{4.2}
          \dim_{H}(J_{F_{(1,3)}})-\dim_{H}(J_{F_{(1,3)}^M})&\leq\frac{4^{u_M}}{2\ln\left(\frac{1+\sqrt{5}}{2}\right)}\left(\frac{M^{-6l_M+1}}{6l_M-1}\right)\\
          &=:err(M) \notag 
      \end{align}
      It is straightforward to verify that inequality \eqref{4.2} remains valid for the set $F_{(1,2)}\setminus\{8\}$, where $l_M$ and $u_M$ denote the corresponding lower and upper bounds of $\dim_{H}(J_{F_{(1,2)}^M\setminus\{8\}})$. Using the MATLAB code provided in \eqref{code} with $N=200$ and $M=50$, we find that  
      $$0.7096\leq h_{F_{(1,3)}^M}\leq 0.7097,$$ and  $$0.7974\leq h_{F_{(1,2)}^M\setminus\{8\}}\leq 0.7975.$$ Thus, by \eqref{4.2}, we get
      $$\dim_{H}(J_{F_{(1,3)}})\leq0.7097+err(50)\leq0.71,$$ and 
      $$\dim_{H}(J_{F_{(1,2)}\setminus\{8\}})\leq0.7975+err(50)\leq0.7976.$$
   \end{proof}
\end{lemma}

 The following table summarizes the bounds for selected subsets of $\N$, which were obtained by running the MATLAB code \eqref{code} with $N=200$.  Although these bounds are not sharp, they are sufficient for our purpose. 
 \begin{table}[h!]
\centering
\caption{Lower and upper bounds for Hausdorff dimension}
\label{tab1}
\renewcommand{\arraystretch}{1.2} 
\begin{tabular}{|l|l|}
\hline
Subsystem & Hausdorff dim. interval \\ \hline
$\{1, 2\}$ & $[0.531277, 0.531281]$ \\ \hline
$\{1, 2, 3\}$ & $[0.705657, 0.705662]$ \\ \hline
$\{1, 2, 3, 4\}$ & $[0.788941, 0.788947]$ \\ \hline
$\{1, 4, 5, 9\}$ & $[0.596692, 0.596696]$ \\ \hline
$\{1, 3, 4, 7\}$ & $[0.660152, 0.660157]$ \\ \hline
$\{1, 2, 3, 5, 8\}$ & $[0.798644, 0.798650]$ \\ \hline
\end{tabular}
\end{table}

We now prove Theorem \ref{th1.2}. Following the technique introduced in the previous section, we first establish part (i) of the theorem.
\begin{theorem}\label{th4.4}
    If $F_{(a_1,a_2)}=\{a_1,a_2,\ldots\}\subset\N$ with $ a_1=1,\ a_2\geq3$, and  $a_n=a_{n-1}+a_{n-2}$ for all $n\geq3$. Then $$\left[0,\dim_{H}(J_{F_{(a_1,a_2)}})\right]=DS(F_{(a_1,a_2)}).$$
    \begin{proof}
       Let $\F_{a_2}^m=\{1,a_2,\ldots,a_m\}$, where $a_i$ denotes the $i^{th}$ element of $F_{(1,a_2)}$. Let $v_s$ be the positive eigenvector of $L_{s,\{1\}}$ with eigenvalue $\lambda^{-2s}$. Thus, by Lemma \ref{l2.8}, $v_s(x)=\left(\frac{1}{\lambda+x}\right)^{2s}$, where $\lambda=\frac{1+\sqrt{5}}{2}$. Note that $\lambda$ satisfies $\lambda^2-\lambda-1=0$. For $x\in\left[0,1\right]$, we have 
       \begin{align*}         L_{s,\F_{a_2}^m}v_s(x)&=\lambda^{-2s}\left(1+\suml_{k=2}^{m}\left(\frac{\lambda+x}{ a_k+ x+\lambda-1}\right)^{2s}\right)v_{s}(x)\\  
        &\leq \left(\lambda^{-2s}+\left(\frac{\lambda+1}{\lambda}\right)^{2s}\suml_{k=2}^{m}\left(\frac{1}{ a_k+\lambda}\right)^{2s}\right)v_s(x).\\
        \end{align*}
        It is easy to see that $a_k>k^4$ for all $k\geq27$. Thus, we get
        \begin{align*}
            \lambda^{-2s}+\left(\frac{\lambda+1}{\lambda}\right)^{2s}\suml_{k=2}^{m}\left(\frac{1}{ a_k+\lambda}\right)^{2s}&\leq \lambda^{-2s}+\lambda^{2s}\suml_{k=2}^{26}\left(\frac{1}{ a_k+\lambda}\right)^{2s}+\lambda^{2s}\intl_{26}^{\infty}\frac{1}{x^{8s}}dx\\
            &=\lambda^{-2s}+\lambda^{2s}\suml_{k=2}^{26}\left(\frac{1}{ a_k+\lambda}\right)^{2s}+\lambda^{2s}\frac{26^{-8s+1}}{8s-1}\\
            &=:\mu(s,a_2)
        \end{align*}
         By direct computation, we obtain  $\mu(s,a_2)<1$ for $s=0.496$ and $a_2=13$. Thus, for these values of $s$ and $a_2$, Lemma \ref{l2.7} implies that $r(L_{s,\F_{a_2}^m})\leq \mu(s,a_2)<1$. Theorem \ref{th1.4} then implies that $\dim_H(J_{\F_{a_2}^m})<0.496$ for all $m$. Thus, by Theorem \ref{th2.3}, it follows that  $\dim_H(J_{F_{(1,13)}})\leq s$ for $s=0.496$. Since there exists a non-decreasing bijection from $F_{(1,13)}$ to $F_{(1,a_2)}$ for all $a_2\geq13$, we have
        $\dim_{H}(J_{F_{(1,a_2)}})\leq\dim_{H}(J_{F_{(1,13)}})<\frac{1}{2}$ by Proposition \ref{p2.4}. Hence, by Theorem \ref{th3.3}, 
        $$\left[0,\dim_{H}(J_{F_{(1,a_2)}})\right]=DS(F_{(1,a_2)}),$$
        for all $a_2\geq13$.\par
        For $a_2\in\{4,5,6\}$, we find that $\mu(0.7,4)<1,\ \mu(0.65,5)<1$, and $\mu(0.63,6)<1$. Thus, applying Lemma \ref{l2.7}, Theorem \ref{th1.4}, and Theorem \ref{th2.3}, we obtain the following estimates:
        \begin{align}\label{4.3}
          \dim_H(J_{F_{(1,4)}})\leq 0.7, \hspace{0.5cm}   \dim_H(J_{F_{(1,5)}})\leq 0.65, \hspace{0.5cm}   \dim_H(J_{F_{(1,6)}})\leq 0.63. 
        \end{align}
        Then, by Proposition \ref{p2.4}, it follows that $$\dim_H(J_{F_{(1,a_2)}})\leq 0.63$$ for $a_2\in\{6,7,\ldots,12\}$. Now, using these estimates we will show that $\left[0,\dim_{H}(J_{F_{(1,a_2)}})\right]=DS(F_{(1,a_2)})$ for every $a_2\in\{3,\ldots,12\}$. Clearly $0$ and $\dim_{H}(J_{F_{(1,a_2)}})$ are in the dimension spectrum of $F_{(1,a_2)}$. Let  $0<s<\dim_{H}(J_{F_{(1,a_2)}})$. Let $\Lambda$ be any finite subset of $F_{(1,a_2)}$, and $a_{k_0}\in F_{(1,a_2)}$ be a strict break point for $(\Lambda,s)$. As we have already seen that in order to prove $s\in DS(F_{(1,a_2)})$, it is enough to show that $\alpha_{m,a_2}(s)>1$ for some sufficiently large $m$, where $$\alpha_{m,a_2}(s):=\suml_{j=1}^{m}\left(\frac{a_{k_0}}{a_{k_0+j}}\right)^{2s}.$$ 
        Note that for every pair $(1,a_2)$ with $a_2\geq3$, we have $D=a_2^2-a_2-1>0$. Moreover, $k_0\geq2$. Therefore, by Lemma \ref{l2.2}, we have
        $$\suml_{j=1}^{m}\left(\frac{a_{k_0}}{a_{k_0+j}}\right)^{2s}\geq \suml_{j=1}^{m}\left(\frac{a_3}{a_{3+j}}\right)^{2s}.$$ For $a_2\in\{4,5,6\}$, let $h_{a_2}$ denote the corresponding upper bound for $\dim_H(J_{F_{(1,a_2)}})$ given in \eqref{4.3}. Since $\alpha_{m,a_2}(s)$ is a decreasing function of $s$, we have 
        $$\alpha_{m,a_2}(s)\geq a_{3}^{2h_{a_2}}\suml_{j=4}^{m+3}\left(\frac{1}{a_{j}}\right)^{2h_{a_2}}.$$
        For $a_2\in\{6,7,\ldots,12\}$, we have $h_{a_2}=0.63$, and computation shows that $\alpha_{m,a_2}(s)>1$ for $m=15$.\\ For $a_2=5$, $h_{a_2}=0.65$. Using a calculator, we find that the inequality $\alpha_{m,a_2}(s)>1$ holds for $m=10$. The remaining two cases, namely $a_2\in\{3,4\}$, require slightly more effort. We first consider the case $a_2=4$. By \eqref{4.3}, we have $$\dim_H(J_{F_{(1,4)}})\leq0.7.$$ 
        In this case, we find that $$\alpha_{m,a_2}(0.66)\geq a_{3}^{2(0.66)}\suml_{j=4}^{m+3}\left(\frac{1}{a_{j}}\right)^{2(0.66)}>1$$ for $m=10$. Since $\alpha_{m,a_2}(s)$ is decreasing in $s$, it follows that for $m=10$, $\alpha_{m,a_2}(s)>1$ for all $0<s\leq0.66$. Consequently, we obtain $$[0,0.66]\subset DS(F_{(1,4)}).$$ Now let $0.66<s<\dim_H(J_{F_{(1,4)}})$. By definition \ref{def3.1} of the strict break point, we have $\dim_H(J_{\Lambda\cup \{a_{k_0}\}})\geq s>0.66$. On the other hand, Table \ref{tab1} shows that $\dim_H(J_{ \{1,4,5,9\}})\leq 0.597$, which implies that $k_0\geq5$. It follows that $$\alpha_{m,a_2}(s)\geq a_{5}^{2h_{a_2}}\suml_{j=6}^{m+5}\left(\frac{1}{a_{j}}\right)^{2h_{a_2}}.$$
        Using $h_{a_2}=0.7$ and calculating the first few terms, we find that  $\alpha_{m,a_2}(s)>1$ for $m=6$. This completes the case $a_2=4$.\par
        Finally, we consider the case $a_2=3$. The argument is similar to the previous case, however a sharper upper bound for $\dim_H(J_{F_{(1,3)}})$ is required. By Lemma \ref{l4.3}, we have $\dim_H(J_{F_{(1,3)}})\leq 0.71$. Then, it can be verified that, for $m=7$, $$\alpha_{m,a_2}(s)>1\ \text{for all}\ s\leq0.67.$$ Moreover, for $0.67<s<\dim_H(J_{F_{(1,3)}})$, the definition of $a_{k_0}$, together with the bound $\dim_H(J_{ \{1,3,4,7\}})\leq 0.661$ (see Table \ref{tab1}), implies that $k_0\geq5$. Thus, we have  
        $$\alpha_{m,a_2}(s)\geq a_{5}^{2h_{a_2}}\suml_{j=6}^{m+5}\left(\frac{1}{a_{j}}\right)^{2h_{a_2}}>1$$ for $h_{a_2}=0.71$ and $m=8$. This completes the case $a_2=3$, and hence the proof of the theorem.
    \end{proof}
\end{theorem}

We will require the following lemma to find the gaps in the dimension spectrum. The idea is similar to that used in \cite[Theorem 1.2]{CHITANGA2026}.
\begin{lemma}\label{l4.5}
   Let $F=\{a_1,a_2,\ldots\}\subseteq\N$ with $a_1<a_2<\cdots$. If there exists $a_k\in F$ such that $\dim_{H}(J_{F\setminus\{a_k\}})<\dim_{H}(J_{\{a_1,a_2,\ldots,a_k\}})$, then 
$$\left(\dim_{H}(J_{F\setminus\{a_k\}}),\dim_{H}(J_{\{a_1,a_2,\ldots,a_k\}})\right)\cap DS(F)=\varnothing.$$
\begin{proof}
   Assume that there exists a subset $E\subset F$ such that $$\dim_{H}(J_{F\setminus\{a_k\}})<\dim_{H}(J_{E})<\dim_{H}(J_{\{a_1,a_2,\ldots,a_k\}}).$$ Clearly, $a_k\in E$. Otherwise, we would have  $E\subset F\setminus\{a_k\}$, and hence $\dim_{H}(J_{E})\leq\dim_{H}(J_{F\setminus\{a_k\}})$, which contradicts our assumption. We claim that $\{a_1,a_2,\ldots,a_{k-1}\}\subset E$. Suppose, to the contrary, that $i\in\{1,2,\ldots,k-1\}$ is the smallest integer such that $a_i\notin E$. Define $E_i=E\setminus\{a_k\}\cup\{a_i\}$. Then it is easy to see that there exists a non-decreasing bijection from $E_i$ to $E$. By Proposition \ref{p2.4}, it follows that $\dim_{H}(J_{E})\leq\dim_{H}(J_{E_i})$. Since $E_i\subset F\setminus\{a_k\}$, we have $$\dim_{H}(J_{E})\leq\dim_{H}(J_{E_i})\leq\dim_{H}(J_{F\setminus\{a_k\}}),$$ which is a contradiction. Thus, $\{a_1,a_2,\ldots,a_{k}\}\subset E$, and therefore $$\dim_{H}(J_{E})\geq\dim_{H}(J_{\{a_1,a_2,\ldots,a_k\}}),$$ Which is again a contradiction. Hence, no such set $E$ exists, and consequently $$\left(\dim_{H}(J_{F\setminus\{a_k\}}),\dim_{H}(J_{\{a_1,a_2,\ldots,a_k\}})\right)\cap DS(F)=\varnothing.$$
\end{proof}
\end{lemma}
So far, we have shown that $F_{(a_1,a_2)}$ has full dimension spectrum in all cases except $(a_1,a_2)=(1,2)$. The next theorem addresses this remaining case and completes the proof of Theorem \ref{th1.2}.

    \begin{theorem}\label{th4.6}
        Let $F_{(a_1,a_2)}=\{a_1,a_2,\ldots\}\subset\N$ with $a_n=a_{n-1}+a_{n-2}$ for all $n\geq3$. Then 
        \begin{enumerate}
            \item[(i)] there exists $0<s<\dim_{H}(J_{F_{(1,2)}})$ such that $[0,s]\subset DS(F_{(1,2)})$,
        \item[(ii)] there exist  $0<a<b<\dim_{H}(J_{F_{(1,2)}})$ such that $(a,b)\cap DS(F_{(1,2)})=\varnothing$.
        \end{enumerate}
        \begin{proof}
            It follows from Theorem \ref{th3.3} that $$\left[0,\min\left\{\frac{1}{2},\dim_{H}(J_{F_{(a_1,a_2)}})\right\}\right]\subset DS(F_{(a_1,a_2)}),$$ which proves statement (i). Moreover, using Lemma \ref{l2.2} together with arguments similar to those in the previous proofs, one can further show that $\left[0,0.72\right]\subset DS(F_{(1,2)})$.\par
            To prove statement (ii), it suffices to find $a_k\in F_{(1,2)}$ such that $$\dim_{H}(J_{F_{(1,2)}\setminus\{a_k\}})<\dim_{H}(J_{\{a_1,a_2,\ldots,a_k\}}).$$ The result then follows from Lemma \ref{l4.5}. We use the explicit lower and upper bounds given in Table \ref{tab1}. In particular, $\dim_{H}(J_{\{1,2,3\}})\leq0.71$, while $\left[0,0.72\right]\subset DS(F_{(1,2)})$. Hence, any such $k$ must satisfy $k\geq4$. We now verify that this occurs for $a_k=8$. By Lemma \ref{l4.3}, $\dim_{H}(J_{F_{(1,2)}\setminus\{8\}})\leq0.7976$, whereas Table \ref{tab1} gives $\dim_{H}(J_{\{1,2,\ldots,8\}})\geq0.7986$. Thus,
            $$\dim_{H}(J_{F_{(1,2)}\setminus\{8\}})<\dim_{H}(J_{\{1,2,\ldots,8\}}),$$
            which completes the proof.
        \end{proof}
    \end{theorem}
\subsection{The dimension spectrum of Lucas numbers}
We conclude the paper with a brief comment on the proof of Theorem \ref{th1.3}. Since the argument proceeds along the same lines as the proof of Theorem \ref{th4.6}, we omit the details.

\medskip
\noindent\textbf{Proof of Theorem \ref{th1.3}.} Statement (i) follows directly from Theorem \ref{th3.3}. Furthermore, using Lemma \ref{l2.2}, it can be shown that $\left[0,0.67\right]\subset DS(F_{(2,1)})$. Analogously to Lemma \ref{l4.3}, we obtain $$\dim_{H}(J_{F_{(2,1)}\setminus\{4\}})\leq0.7867,$$
whereas Table \ref{tab1} shows that
$\dim_{H}(J_{\{1,2,3,4\}})\geq0.7889.$ Consequently,
            $$\dim_{H}(J_{F_{(2,1)}\setminus\{4\}})<\dim_{H}(J_{\{1,2,3,4\}}),$$
            which completes the proof.\\

	\noindent {\bf Statements and Declarations:}
\begin{itemize}
	\item {\bf Competing Interests:} The authors have no relevant financial or non-financial interests to disclose.
	\item {\bf Data availability:} Data sharing is not applicable to this article as no datasets were generated or analysed during the current study.
	\item 	{\bf Author Contributions:} All authors contributed equally in this manuscript.
\end{itemize}

\section*{Acknowledgement}
The first author thanks the MHRD, India, for financial support in the form of a Senior
Research Fellowship at the Indian Institute of Technology Delhi.

        \nocite{*}
\bibliographystyle{plain}

\bibliography{ref}
\end{document}